\numberwithin{equation}{section}
\newcommand{\e}{\mathbf{e}}
\newcommand{\osc}[1]{\underset{#1}{\,\text{osc}\,}}
\newcommand{\coti}{\text{cot}^{-1}}
\renewcommand{\O}{{\mathcal O}}
\renewcommand{\bar}[1]{{\overline{#1}}}
\renewcommand{\H}{{\mathcal H}}
\newcommand{\R}{\mathbb{R}}
\newcommand{\eps}{\varepsilon}
\renewcommand{\phi}{\varphi}
\renewcommand{\div}{\text{div}}
\newcommand{\diva}{\text{div}\ }
\newcommand{\atan}[2]{\text{atan2}(#1,#2)}
\def\XXint#1#2#3{{\setbox0=\hbox{$#1{#2#3}{\int}$ }
\vcenter{\hbox{$#2#3$ }}\kern-.6\wd0}}
\newcommand{\eg}{e.g., }
\def\hexnumber#1{\ifcase#1 0\or1\or2\or3\or4\or5\or6\or7\or8\or9\or
 A\or B\or C\or D\or E\or F\fi}
\edef\msbhx{\hexnumber\symAMSb}
\mathchardef\emptyset="0\msbhx3F
\def\Bbb#1{{\mathbb#1}}
\def\R{\Bbb R}  \def\Rx{\R\mkern1mu^}
\newtheorem{theorem}{Theorem}
\newtheorem{lemma}[theorem]{Lemma}
\theoremstyle{definition}
\newtheorem{remark}[theorem]{Remark}
\renewcommand{\email}[2][]{%
  \ifx\emails\@empty\relax\else{\g@addto@macro\emails{,\space}}\fi%
  \@ifnotempty{#1}{\g@addto@macro\emails{\textrm{(#1)}\space}}%
  \g@addto@macro\emails{#2}%
}
\def\dsty{\displaystyle} 
\def\Eq#1$$#2$${\StEq#1  \EnEq{#2}}
\def\StEq#1 #2\EnEq#3{\begin{equation}\label{#1} #3\end{equation}}
\newdimen\eqjot \eqjot = 1\jot
\def\addtab#1={#1\;&=}
\def\addtabe#1=#2={#1=#2\;&=}
\def\openupeq{\openup \the\eqjot}
\def\ezeq#1#2#3{{\def\\{\cr#1}\vcenter{\openupeq \halign{$\displaystyle 
   \hfil##$&$\displaystyle##\hfil$&&\hskip#2pt$\displaystyle##\hfil
        $\cr#1#3\cr}}}}
\def\eaeq{\ezeq\addtab}
\def\eeq{\eaeq{20}}
\def\bo#1{{\bf #1}}
\def\ro#1{{\rm #1}} \def\rbox#1{\hbox{\rm #1}}
\def\rox#1{\quad \rbox{#1}\quad }
\def\roq#1{\qquad \rbox{#1}\qquad }
\def\bpa#1{\bigl(\,{#1}\,\bigr)}
\def\bbk#1{\bigl [\,{#1}\,\bigr ]}
\def\inside#1{\ro{int}\>#1}  \def\intS{\inside S}
\def\capz{\,\cap\,}
\def\C{\textrm{C}} \def\O{\textrm{O}}
\def\v{\bo v}  
\def\V{\bo V} \def\W{\bo W}  
\def\tha{\theta} \def\vph{\varphi}
\def\ostrut#1#2{\hbox{\vrule height #1pt depth #2pt width 0pt}}
\title[Computation of Invariants via Boundary Integrals]{Computation of Circular Area and Spherical Volume Invariants via Boundary Integrals}
\author{Riley O'Neill, Pedro Angulo-Umana, Jeff Calder, Bo Hessburg, Peter J.~Olver, Chehrzad Shakiban, \and Katrina Yezzi-Woodley}
\address[O'Neill, Shakiban]{Department of Mathematics, University of St.~Thomas}
\address[Angulo-Umana, Calder, Hessburg, Olver]{School of Mathematics, University of Minnesota}
\address[Yezzi-Woodley]{Department of Anthropology, University of Minnesota}
\email{riley.oneill@stthomas.edu, angul010@umn.edu, jcalder@umn.edu, hessb017@umn.edu, olver@umn.edu, cshakiban@stthomas.edu, yezz0003@umn.edu}
\thanks{{\bf Funding}: The authors are grateful to the National Science Foundation for support under grant NSF-DMS:1816917, the University of St.~Thomas Center for Applied Math, and a University of Minnesota Grant in Aid award. \hfill\break \ostrut11\quad\ {\bf Source Code}: \href{https://github.com/jwcalder/Spherical-Volume-Invariant}{https://github.com/jwcalder/Spherical-Volume-Invariant}}
\begin{document}

\begin{abstract}
We show how to compute the circular area invariant of planar curves, and the spherical volume invariant of surfaces, in terms of line and surface integrals, respectively. We use the Divergence Theorem to express the area and volume integrals as line and surface integrals, respectively, against particular kernels; our results also extend to higher dimensional hypersurfaces. The resulting surface integrals are computable \emph{analytically} on a triangulated mesh. This gives a simple computational algorithm for computing the spherical volume invariant for triangulated surfaces that does not involve discretizing the ambient space.  We discuss potential applications to feature detection on broken bone fragments of interest in anthropology.
\end{abstract}

\maketitle

\section{Introduction}

The aim of this paper is to facilitate the computation of certain integral invariants that have been proposed for applications in digital image processing, namely, the circular area and spherical volume invariants, as defined below.  We show that both can be efficiently evaluated by reducing them to boundary integrals --- line or surface integrals, respectively, --- plus an additional term that depends only on the local surface geometry, thus enabling them to be computed directly from the curve or surface image data.

More specifically, given a Jordan plane curve $C \subset \Rx2$ 
with interior $\Omega = \ro{int}\>C$,  at each point $p$ in the curve $C$, the value of the (local) \emph{circular area invariant} of radius $r > 0$ at $p$ is defined as the area (Lebesgue measure)  of the region given by the intersection of the interior of the curve with a disk of radius $r$ centered at the point $p$, denoted $D_r(p)$:
\begin{equation}\label{eq:Cai}
A_{C,r}(p) = A(D_r(p) \capz \Omega ).
\end{equation}
The circular area is clearly invariant under Euclidean motions of the curve, of course assuming one relates the base points $p$ accordingly. The ability of the local circular area invariant to uniquely characterize the curve up to Euclidean motion is discussed in detail in \cite{CaEs}. See Figure \ref{fig:CA} for an illustration. 
For sufficiently smooth curves, e.g.~$\C^3$, the circular area invariant is related to the curvature $\kappa (p) $ at the point $p \in C$ by the asymptotic expansion \cite{CaEs}
\begin{equation}\label{eq:Caiasym}
A_{C,r}(p) = \frac{\pi r^2}{2}- \frac{1}{3}\kappa(p) r^3 + \O(r^4) \ \ \text{ as }r \to 0.
\end{equation}
A global invariant can be obtained by averaging over the curve:
\Eq{gainv}
$$\widetilde A_{C,r} = \frac 1{L}\oint_{C}A_{C,r}(p(s))\,ds,$$
where length $l(C)$ denotes the length of $C$.

\begin{figure}
\centering
\subfloat[Circular Area Invariant]{\includegraphics[width=.4\textwidth]{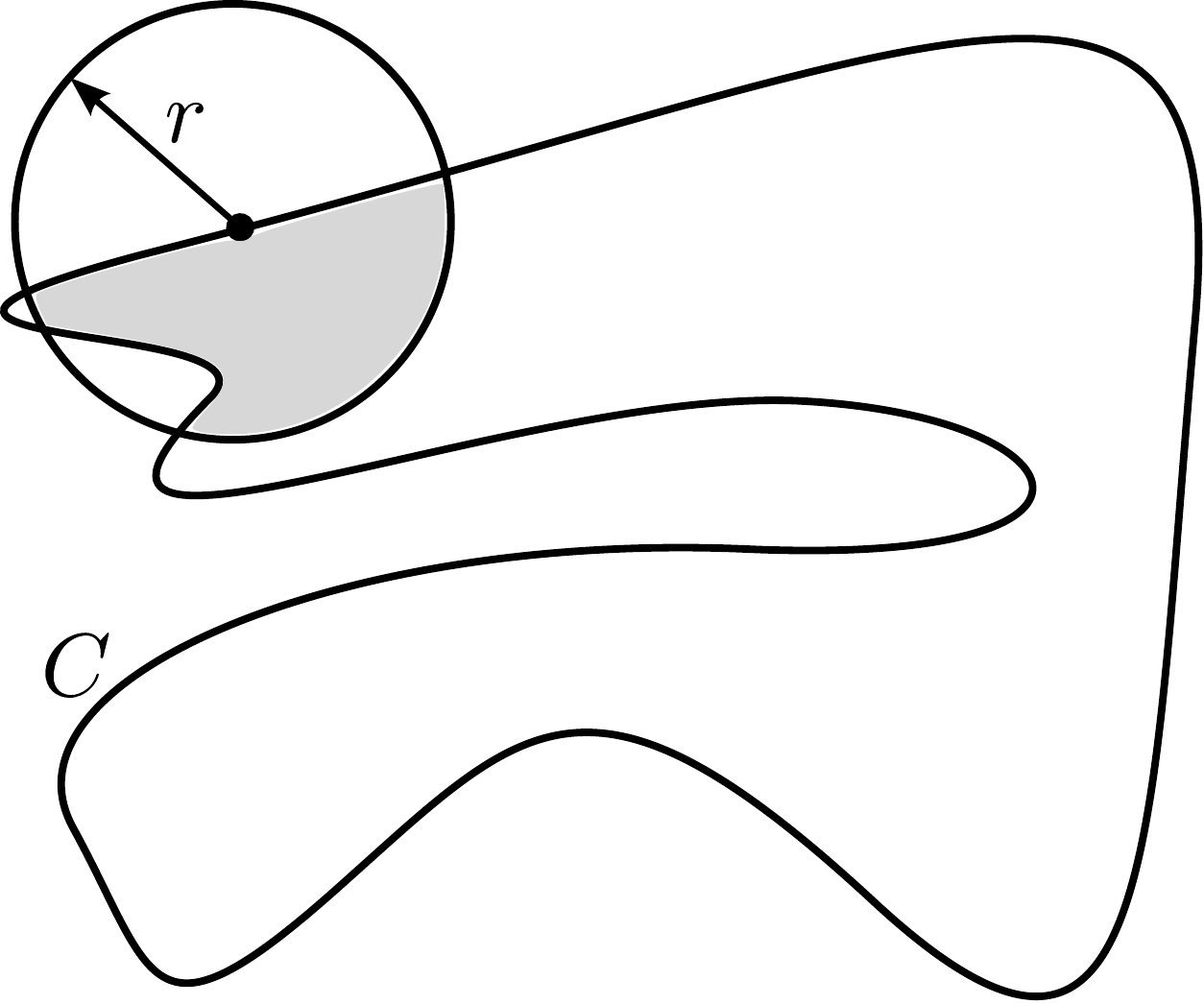}\label{fig:CA}}
\subfloat[Spherical Volume Invariant]{\includegraphics[width=0.45\textwidth,clip=true,trim = 150 130 150 80]{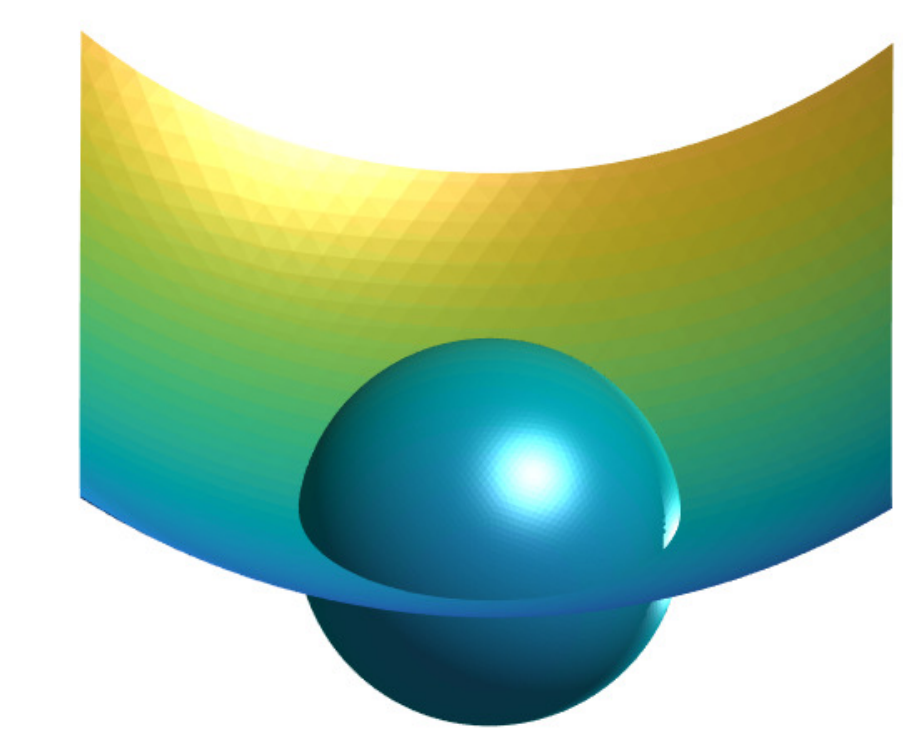}\label{fig:SA}}
\caption{Illustration of the circular area and spherical volume invariants.}
\end{figure}
  
Similarly, given a closed surface $S \subset \Rx3$ bounding a domain $\Omega = \intS$, we define the \emph{spherical volume invariant} at each point $p\in S$ to be the volume of the solid region given by intersecting the interior of the surface with a sphere of radius $r >0$ centered at the point $p$:
\begin{equation}\label{eq:Vsr}
V_{S,r}(p) = V(\Omega \capz B_r(p)  ),
\end{equation}
as illustrated in Figure \ref{fig:SA}. Again, invariance under three-dimensional Euclidean motions is clear. For $\C^3$ surfaces, the spherical volume invariant is related to the mean curvature of the surface via the expansion
\begin{equation}\label{eq:Vsrasym}
V_{S,r}(p) = \frac{2}{3}\pi r^3 - \frac{1}{4}\pi H(p) r^{4} + \O(r^5)\ \ \text{ as }r \to 0,
\end{equation}
where $H(p)$ is the mean curvature of $S$ at $p$, and $S$ is a $\C^3$ surface.  The spherical volume invariant has proven useful for feature extraction \cite{yang2006robust,pottmann2009integral,pottmann2007principal}, and a further analysis of the shape of the region $\intS \capz B_r(p)$ provides a robust estimation of the second fundamental form --- see \cite{pottmann2007principal} and Subsection \ref{sec:pk}.  Again, one can produce the corresponding global spherical volume invariant by integrating the local invariant over the entire surface. 

These quantities clearly extend to the corresponding hyperspherical volume invariant of closed hypersurfaces in $S \subset \R^n$.  
Our main result is the general formula \eqref{eq:Vform} that expresses this integral invariant in terms of a hypersurface integral over $S$.  In the planar case, with $n=2$, our general formula reduces to a useful formula  \eqref{eq:caiformula} or \eqref{eq:Vform2} for the circular area invariant $A_{C,r}(p)$ in terms of a suitable line integral over the curve $C$. For surfaces in $n=3$ dimensional space, it reduces to  the key formula \eqref{eq:Vform3}  for the spherical volume invariant $V_{S,r}(p)$ in terms of a surface integral over $S$. Our results apply to Lipschitz codimension $1$ submanifolds, which allows $S$ to be a triangulated mesh, as is often used to approximate surfaces in practice.  These new formulas are simple and fast to implement on a triangulated mesh. In particular, our method does not require discretizing the ambient three dimensional space off the surface, as was done using octrees and the Fast Fourier Transform in \cite{pottmann2009integral}. Similar ideas can be used to evaluate other integral invariants, although a number of them are already expressed in terms of integrals of the type sought after here.

This paper was motivated by an ongoing project to analyze and reassemble broken bone fragments, a problem of significant interest in anthropology, paleontology, and surgery, building on earlier work of two of the authors on planar and surface jigsaw puzzle reassembly, \cite{HOpuzzle, HumDum}.  A recent undergraduate REU project, \cite{Robp}, has successfully applied the circular area integral invariant to planar jigsaw puzzle reassembly, following \cite{HOpuzzle}.  Indeed, one can easily envision modifying the circular area invariant in order to incorporate designs  (writing, pictures, texture) that may appear on the puzzle pieces, potentially relying on some form of digital inpainting algorithm, \cite{BeEsGi,BBCS,ChSh,ChShi}, to extend the design in the circular region on one side of the curve to the other, after which it could be compared to other potential matches, or, alternatively  use of texture information to effect the reconstruction, as advocated in \cite{SagErc,SagErco}.  

Another potential application of these invariants is the detection of fracture edges, meaning ridges delineating the boundaries between original surfaces of the bone and break surfaces. Paleoanthropologists and zooarchaeologists study human biological and behavioral evolution and are interested in fracture edges because they provide valuable information about the agent of fragmentation~\cite{coil2017new,garcia2006determinacion,pickering2005contribution}, which may be, for example, humans, large carnivores, trampling, geological processes, or hydraulic action~\cite{merritt2017diagnostic,karr2012actualistic}. Determining the agent of fragmentation is essential for reconstructing how archaeological sites were formed. Fracture edges can also be used to find bones that refit, which aids in the identification of taxa and skeletal elements of vertebrates found at sites~\cite{bartram1999explaining}.  We propose to detect fracture edges by thresholding the spherical volume invariant, and demonstrate by showing results of detecting fracture edges on bone fragments in Section \ref{sec:numerics}.

The circular area and spherical volume invariants are particular cases of the general theory of integral invariants, \cite{HannHick,MCHYS,PWHY}, which have also been successfully applied to a variety of image processing problems.  See  \cite{FengKoganKrim} for applications of the moving frame method to their classification and signature construction under basic group actions, \eg Euclidean and equi-affine geometries. Distance histograms underly the widely used methods of shape contexts, \cite{BeMaPu}, and shape distributions, \cite{OFCD}. Histograms based on various geometric invariants (lengths, areas, etc.) play a fundamental role throughout a broad range of modern image processing algorithms, including shape representation and classification, \cite{AKKS,SHB}, image enhancement, \cite{SHB,Sapiro}, the scale-invariant feature transform (SIFT) \cite{Lowe, PelWer}, its affine-invariant counterpart (ASIFT), \cite{ASIFT}, and object-based query methods, \cite{SaGuUl}. 

\subsection{Outline}

In Section \ref{sec:circ} we give a simple formula for the circular area invariant in terms of a line integral. In Section \ref{sec:Vsr} we study the spherical volume invariant, and show how to use the Divergence Theorem to convert the volume integral into a surface integral, yielding a new formula for the invariant. Furthermore, in Subsection \ref{sec:pk}, we show how to extend our methods to estimate the principal curvatures of the surface by adapting the methods based on Principal Component Analysis (PCA) on local neighborhoods developed in  \cite{pottmann2007principal}. Finally, in Section \ref{sec:numerics} we discuss numerical implementations and present the results of numerical experiments on real data. We use the Euclidean norm on $\R^n$ throughout, leaving the investigation of more general norms to a future project.

\section{The Circular Area Invariant}
\label{sec:circ}

As a warmup, we consider the local circular area invariant \eqref{eq:Cai}. We assume $C$ is the oriented boundary of an open bounded domain $\Omega\subset \R^2$ with Lipschitz boundary. Consider a point $p\in C$ with $p=(p_1,p_2)$.   Consider the vector field  
\[\V (x)= \frac{1}{2} \,(x - p) = \frac{1}{2} \,(x_1-p_1,x_2-p_2)\]
 and notice that $\diva  \V    = 1$. By the Divergence Theorem, we can express the circular area invariant as 
\Eq{green}
$$\eeq{{A_{C,r}(p)= \iint_{\Omega \capz D_r(p) } \, dxdy}=\iint_{\Omega \capz D_r(p) } (\diva \V)  \, dxdy\\
= \oint_{C\capz D_r(p)}\V \cdot \nu\,ds + \oint_{\Omega\capz \partial D_r(p)}\V \cdot \nu\,ds,}$$
where $\nu$ denotes the unit outward normal to the curve $C$ in the first line integral and to the circular boundary $\partial D_r(p)$ in the second.  
Let us parametrize the circular boundary of the disk $D_r(p)$ by $c(\theta) = p + r(\cos \theta,\sin\theta)$, so that 
$$\V \cdot \nu = \frac{1}{2}\,r^2 \roq{on}\partial D_r(p).$$
 Let $\theta_1 < \theta_2$ be the angles at which the curve $C$ intersects the disk $D_r(p)$, assuming for the moment there are only 2 intersections and that $C$ lies inside the disk $D_r(p)$ for $\theta_1 < \theta < \theta_2$. Now, the second term in \eqref{green} is 
\begin{equation}\label{eq:second}
\oint_{\Omega\capz \partial D_r(p)}\V \cdot \nu\,ds = \int_{\theta_1}^{\theta_2} \frac{r^2}{2}(\sin^2\theta + \cos^2\theta)\, d\theta = \frac{r^2}{2}(\theta_2 - \theta_1).
\end{equation}
Therefore, our formula for the circular area invariant is
\begin{equation}\label{eq:caiformula}
A_{C,r}(p)=\oint_{C\capz D_r(p)}\V \cdot \nu\,ds+\frac{r^2}{2}(\theta_2-\theta_1).
\end{equation}
Notice this only involves integration along the curve $C$. The contour integral is a correction from the flat setting where $C$ is a line and $A_{C,r}(p)=\pi r^2/2$, since in this case $\theta_2-\theta_1 =\pi$ and $\V \cdot \nu = 0$ on $C\capz D_r(p)$. 

It is straightforward to generalize \eqref{eq:caiformula} to more than two intersections of $C$ and $\partial D_r(p)$. If the intersections occur at angles $\theta_1 < \theta_2 < \cdots < \theta_{2k}$, and $C$ lies inside the disk\footnote{We ignore any intersection point where, nearby, $C$ remains on one side or the other of the boundary of the disk.} $D_r(p)$ for $\theta_{2i-1} < \theta < \theta_{2i}$ for $i = 1,\ldots,k$. Then we have
\begin{equation}\label{eq:caiformula2}
A_{C,r}(p)=\oint_{C\capz D_r(p)}\V \cdot \nu\,ds+\frac{r^2}{2}\sum_{i=1}^k(\theta_{2i}-\theta_{2i-1}).
\end{equation}

\section{The Spherical Volume Invariant}
\label{sec:Vsr}

Having established a formula in the simple case of the two dimensional circular area invariant, we now turn to the spherical volume invariant \eqref{eq:Vsr}. The argument used in Section \ref{sec:circ} is not practical in three dimensions, since the integration over $\Omega\cap \partial D_r(p)$ in \eqref{green} becomes a surface integral, which 
defeats the point of reducing the calculation to an integral on the boundary surface.

We thus take a slightly different approach. Since the resulting formula will be applicable in all dimensions  $n\geq 2$, we proceed in general. We assume our hypersurface $S\subset \R^n$ is the boundary of an open and bounded set $\Omega\subset \R^n$ with Lipschitz boundary. Without loss of generality, we take $p=0\in S$, and set $B_r=B_r(p)=B_r(0)$ to be the ball of radius $r$ centered at $p=0$.  The hyperspherical invariant at $p=0$  is thus
\begin{equation}\label{eq:svi}
V_{S,r}:= V_{S,r}(0) =\int_{\Omega \capz B_r } \, dx.
\end{equation} 
Define the vector field
\begin{equation}\label{eq:v}
\V (x) = \frac{1}{n}\,x, \roq{and note that} \diva \V =1.
\end{equation}
For any divergence free vector field $\W \colon \R^n\to \R^n$, whereby $\div \W = 0$, we can express $V_{S,r}$ via the Divergence Theorem as
\begin{align}\label{eq:svi_alt}
V_{S,r}&=\int_{\Omega \capz B_r } \div(\V +\W )\, dx
&=\int_{S \capz B_r }(\V +\W )\cdot \nu \, dS + \int_{\partial \Omega \capz B_r }(\V +\W )\cdot \nu \, dS,
\end{align}
where $\nu$ denotes the outward normal to $S \capz B_r $ in the first term, and to $\partial \Omega \capz B_r $ in the second. The first term is an integral over the surface $S$, as we seek, while the second is an integral over $\partial B_r$, which is undesirable. 

Now, the idea is to choose the vector field $\W $ so that the second term vanishes, yielding our formula. Noting that $\V \cdot \nu = r/n$ on $\partial B_r$, we see that $\W $ must satisfy
\begin{equation}\label{eq:Veq}
\W \cdot \nu + \frac{r}{n} = 0 \rox{on } \partial \Omega \capz B_r .
\end{equation}
We will construct $\W $ as $\W  = \nabla u$ for a harmonic function $u$. Then \eqref{eq:Veq} is equivalent to the Poisson problem
\begin{equation}\label{eq:poisson}
\left\{\begin{aligned}
\Delta u &= 0&&\text{in }B_r,\\ 
\frac{\partial u}{\partial \nu} + \frac{r}{n} &=0&&\text{on } \ \partial \Omega \capz B_r .
\end{aligned}\right.
\end{equation}
If we look for a smooth solution of \eqref{eq:poisson} then the compatibility condition 
\begin{equation}\label{eq:comp}
\int_{\partial B_R}\frac{\partial u}{\partial \nu}\, dS = 0
\end{equation}
must hold. This would require modifying the boundary condition away from $\partial \Omega \capz B_r $, which is impractical, since the set $\partial B_r\setminus \Omega$ could be arbitrarily small, and is dependent on the particular point $p$ chosen on the surface.

Instead of seeking to satisfy the compatibility condition \eqref{eq:comp}, we relax the requirement that $u$ is smooth but continue to impose the boundary condition in \eqref{eq:poisson}.
We allow $u$ to have a singularity at the origin, and thus consider the Poisson problem 
\begin{equation}\label{eq:poissonorig}
\left\{\begin{aligned}
\Delta u &= 0&&\text{in }B_r\setminus\{0\},\\ 
\frac{\partial u}{\partial\nu} + \frac{r}{n} &=0&&\text{on } \partial B_r,
\end{aligned}\right.
\end{equation}
on the punctured ball.  A solution to the latter boundary value problem is given by 
\begin{equation}\label{eq:udef}
u(x) = \alpha_nr^n\Phi(x),
\end{equation}
where $\alpha_n$ is the measure of the unit ball in $\R^n$, and 
\begin{equation}\label{eq:fund}
\Phi(x) = 
\begin{cases}\dsty
-\,\frac{1}{2\pi}\log|x|,&\text{if }n=2\\\dsty
\frac{1}{n(n-2)\alpha_n\,|x|^{n-2}},&\text{if }n\geq 3
\end{cases}
\end{equation}
 is the fundamental solution of Laplace's equation. Thus, we are effectively circumventing the compatibility condition \eqref{eq:comp} by placing a point source at the origin. Due to the singularity of $u$, the argument leading to \eqref{eq:svi_alt} is no longer valid, and we need to proceed more cautiously.

First, we note that, for any $n$,
\begin{equation}\label{eq:fundprop}
\nabla u(x) = -\,\frac{r^n}{n}\,\frac{x}{|x|^{n}} \rox{for }x\neq 0.
\end{equation}
Let $0 < \eps < r$. By the Divergence Theorem and the boundary condition in \eqref{eq:poissonorig} we have 
\begin{align*}
\int_{S\capz(B_r\setminus B_\eps)}(\V+\nabla u)\cdot \nu \, dS &=\int_{\partial(\Omega\capz(B_r\setminus B_\eps))}(\V + \nabla u)\cdot \nu\, dS + \int_{\Omega\capz \partial B_\eps}(\V + \nabla u)\cdot \nu \, dS\\
&=\int_{\Omega\capz(B_r\setminus B_\eps)}\div(\V + \nabla u)\, dx + \int_{\Omega\capz \partial B_\eps}\left(\frac{\eps}{n}-\frac{r^n}{n\,\eps^{n-1}}\right) dS\\
&=\int_{\Omega\capz(B_r\setminus B_\eps)} dx + \left(\frac{\eps}{n}-\frac{r^n}{n\,\eps^{n-1}}\right)\H^{n-1}(\Omega\capz \partial B_\eps)\\
&=V_{S,r} - V_{S,\eps}+ \left(\frac{\eps}{n}-\frac{r^n}{n\,\eps^{n-1}}\right)\H^{n-1}(\Omega\capz \partial B_\eps),
\end{align*}
where $\H^{n-1}$ denotes $(n-1)$--dimensional Hausdorff measure. Therefore
\begin{equation}\label{eq:Vf}
V_{S,r} = V_{S,\eps} + \frac{1}{n}\int_{S\capz (B_r\setminus B_\eps)}\left(1-\frac{r^n}{|x|^n}\right)(x\cdot \nu) \, dS +\alpha_n(r^n-\eps^n)\, \frac{\H^{n-1}(\Omega\capz \partial B_\eps)}{\H^{n-1}(\partial B_\eps)}.
\end{equation}
All that is left is to send $\eps\to 0$, and we state the consequence as a theorem.
\begin{theorem}\label{thm:Vsr}
Let $\Omega\subset \R^n$ be open and bounded with Lipschitz boundary $S:=\partial \Omega$. Let $p\in S$ and assume the limit
\begin{equation}\label{eq:Gamma}
\Gamma(p):= \lim_{\eps\to 0^+}\frac{\H^{n-1}(\Omega\capz \partial B_\eps(p))}{\H^{n-1}(\partial B_\eps(p))}
\end{equation}
exists. Then we have
\begin{equation}\label{eq:Vform}
V_{S,r}(p) = \frac{1}{n}\int_{S\capz B_r(p)}\left(1-\frac{r^n}{|x-p|^n}\right)(x-p)\cdot \nu \, dS + \alpha_nr^n \Gamma(p) .
\end{equation}
\end{theorem}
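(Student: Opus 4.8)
The plan is to obtain \eqref{eq:Vform} by passing to the limit $\eps\to 0^+$ in the identity \eqref{eq:Vf}, which has already been established for every $0<\eps<r$. Since the left-hand side $V_{S,r}$ is independent of $\eps$, it suffices to analyze the three terms on the right-hand side separately. I would dispatch the volume term and the boundary term first, as these converge by elementary considerations, and then deduce convergence of the remaining truncated surface integral by subtraction.

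First I would show that $V_{S,\eps}\to 0$: since $V_{S,\eps}=V(\Omega\capz B_\eps)\le V(B_\eps)=\alpha_n\eps^n$, this term is controlled by the volume of the shrinking ball and hence vanishes. Next, the last term in \eqref{eq:Vf} factors as $\alpha_n(r^n-\eps^n)$ times the ratio $\H^{n-1}(\Omega\capz\partial B_\eps)/\H^{n-1}(\partial B_\eps)$; the first factor tends to $\alpha_n r^n$, and by the standing hypothesis \eqref{eq:Gamma} the ratio tends to $\Gamma(p)$, so their product converges to $\alpha_n r^n\Gamma(p)$.

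With these two limits in hand, rearranging \eqref{eq:Vf} gives, for each fixed $0<\eps<r$ (where the truncated integral is finite because the integrand is bounded on $|x|\ge\eps$),
\begin{equation*}
\frac{1}{n}\int_{S\capz (B_r\setminus B_\eps)}\left(1-\frac{r^n}{|x|^n}\right)(x\cdot \nu)\, dS = V_{S,r} - V_{S,\eps} - \alpha_n(r^n-\eps^n)\,\frac{\H^{n-1}(\Omega\capz \partial B_\eps)}{\H^{n-1}(\partial B_\eps)}.
\end{equation*}
By the two limits just established, the right-hand side converges as $\eps\to 0^+$ to $V_{S,r}-\alpha_n r^n\Gamma(p)$; consequently the left-hand side converges as well, and I would take this common limit as the definition of the (improper) surface integral appearing in \eqref{eq:Vform}. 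Rearranging the resulting limiting identity and restoring the base point $p$, which we set to $0$ without loss of generality so that $x-p=x$, yields \eqref{eq:Vform}.

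The one point demanding care is the nature of the surface integral in \eqref{eq:Vform}: its integrand $(1-r^n/|x|^n)(x\cdot\nu)$ is singular at $x=0\in S$, so a priori it need not be absolutely convergent on a general Lipschitz surface. The elegant feature of the argument is that I do not need any independent integrability estimate, since the identity \eqref{eq:Vf} forces the truncated integrals to converge, so the integral exists at least as the principal-value limit $\lim_{\eps\to0^+}\int_{S\capz(B_r\setminus B_\eps)}$. If one wants absolute convergence, the expected obstacle is estimating $x\cdot\nu$ near the origin: for a $\C^2$ surface through $p=0$ the tangency of $S$ to its tangent plane gives $|x\cdot\nu|=\O(|x|^2)$, which when weighted by $r^n/|x|^n$ and integrated against the $\O(\rho^{n-2})\,d\rho$ surface measure produces an absolutely convergent integral, whereas for a merely Lipschitz surface only the $\O(|x|)$ bound is available and the principal-value interpretation is the natural one.
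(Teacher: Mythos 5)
Your proposal is correct and takes essentially the same approach as the paper: the paper also proves Theorem \ref{thm:Vsr} by sending $\eps\to 0^+$ in the identity \eqref{eq:Vf}, using the hypothesis \eqref{eq:Gamma} for the last term, the trivial bound $V_{S,\eps}\le \alpha_n\eps^n$ for the second, and concluding that the truncated surface integrals converge, which is exactly how the paper defines the singular integral as a principal value (cf.\ Remark \ref{rem:PV}). Your closing observation about absolute convergence under additional smoothness likewise mirrors the paper's $\C^{1,\alpha}$ remark.
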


A few remarks are in order.

\begin{remark}
Notice the integrand in \eqref{eq:Vform} has a singularity at $x=p$. Since $S$ is only assumed  to be Lipschitz, the singularity may not be integrable, and so we define the integral via its principal value
\[\int_{S\capz B_r(p)}\left(1-\frac{r^n}{|x-p|^n}\right)(x-p)\cdot \nu \, dS:=\lim_{\eps\to 0^+}\int_{S\capz (B_r(p)\setminus B_\eps(p))}\left(1-\frac{r^n}{|x-p|^n}\right)(x-p)\cdot \nu\, dS,\]
which exists thanks to \eqref{eq:Vf} and \eqref{eq:Gamma}. If $S\in \C^{1,\alpha}$ then we have 
\[(x-p)\cdot \nu = \O(|x-p|^{1+\alpha}) \text{ as }x\to p\]
and so the kernel singularity $|x-p|^{1-n+\alpha}$ is integrable on the $n-1$ dimensional surface. 
\label{rem:PV}
\end{remark}

\begin{remark}
If the surface $S$ is differentiable at $p$ then $\Gamma(p) = \frac{1}{2}$, and thus 
\begin{equation}\label{eq:VformC1}
V_{S,r}(p) = \frac{1}{n}\int_{S\capz B_r(p)}\left(1-\frac{r^n}{|x-p|^n}\right)(x-p)\cdot \nu \, dS + \frac{1}{2}\,\alpha_nr^n.
\end{equation}
Since a Lipschitz surface is differentiable almost everywhere, the formula \eqref{eq:VformC1} holds at almost every point of $S$. 
\label{rem:Gamma1}
\end{remark}

\begin{remark}
If the surface $S \subset \Rx3$  is a triangulated mesh and $p\in S$ is a vertex of the mesh, then 
\[(x-p)\cdot \nu = 0\]
at all points $x$ in the vertex polygon associated to $p$ (i.e., the triangles adjacent to $p$), and where $\nu$ denotes the unit normal to the triangle containing $x$.  Thus, the kernel is integrable on triangulated meshes.
Moreover, $\Gamma(p)$ exists, and \eqref{eq:Vform} holds,  for every $p\in S$. In Subsection \ref{sec:gamma}, we derive an explicit formula for $\Gamma(p)$ on a triangulated mesh in terms of the vertex polygon of $p$. 
\label{rem:Gamma2}
\end{remark}
\begin{remark}
The limit \eqref{eq:Gamma} defining $\Gamma(p)$ may fail to exist at a point of non-differentiability of a Lipschitz hypersurface $S$. Consider, for example, $n=2$ and take the curve $C$ to be the graph of the Lipschitz function 
\[f(x) = |x|\sin\bpa{\log |x| }.\]
Take the interior of $C$ to be the epigraph $\{x\in \R^2 \, :\, f(x)>0\}$. Then the limit \eqref{eq:Gamma} does not exist at $p=0$, since along the sequence $x_k=e^{\frac{\pi}{2}-k\pi}$ we have $f(x_k)=f(x_k)=(-1)^kx_k$. 
\label{rem:Gamma3}
\end{remark}
\begin{remark}
Finally, let us note that in dimension $n=2$, the formula \eqref{eq:Vform} reads
\begin{equation}\label{eq:Vform2}
A_{C,r}(p) = \frac{1}{2}\oint_{C\capz D_r(p)}\left(1-\frac{r^2}{|x-p|^2}\right)(x-p)\cdot \nu\, ds + \pi \, r^2\,\Gamma(p) .
\end{equation}
In dimension $n=3$, it becomes
\begin{equation}\label{eq:Vform3}
V_{S,r}(p) = \frac{1}{3}\int_{S\capz B_r(p)}\left(1-\frac{r^3}{|x-p|^3}\right)(x-p)\cdot \nu  \, dS + \frac{4}{3}\,\pi \,r^3\,\Gamma(p) .
\end{equation}
\label{rem:n2n3}
\end{remark}

\subsection{An analytic expression for \texorpdfstring{$\Gamma(p)$}o on a triangulated mesh}
\label{sec:gamma}

We give here an analytic expression for $\Gamma(p)$, defined in \eqref{eq:Gamma}, when $p$ is a vertex of a triangulated mesh surface in $\R^3$. Let us assume we have made a translation and rotation so that the vertex under consideration is $p=0$ and the unit outward normal vector at the origin is $\nu = (0,0,-1)$. Of course, there is no well-defined normal at the vertex $p=0$ itself, and so $\nu$ should chosen to be ``close'' to the nearby unit normals, in that it approximates the normal to the smooth surface represented by the mesh. For example, it could be the normalized average of the normals to the triangles in the vertex polygon; another possibility is that it is the normal to the least squares approximating plane to the vertices adjacent to $p$.

The computation of $\Gamma:=\Gamma(0)$ involves only the  \emph{vertex triangles}  $T_1,\dots,T_k$ that are adjacent to the vertex $p=0$. See Figure \ref{fig:gamma} for a depiction of these triangles and the area of the sphere we wish to compute. Since the outward normal at $p=0$ is $(0,0,-1)$, we will also   assume that the outward unit normal vector $\nu^i=(\nu^i_1,\nu^i_2,\nu^i_3)$ to each vertex triangle $T_i$ satisfies\footnote{We will exclude ``bizarre'' vertices where this assumption does not hold under any reasonable choice of the normal $\nu$ at $p$.} $\nu^i_3<0$.  Finally, in view of the definition \eqref{eq:Gamma} of $\Gamma $, we may extend the vertex triangles to $\infty$ in the radial direction, and compute
\begin{equation}\label{eq:gamma2}
\Gamma:= \frac{1}{4\pi}\int_{\Omega\capz \partial B_1} \, dS,
\end{equation}
where $\Omega$ is the region above the (extended) vertex triangles in the $x_3$-direction.
\begin{figure}
\centering
\subfloat[Vertex triangles]{\includegraphics[trim = 30 30 30 30, clip = true, width=0.32\textwidth]{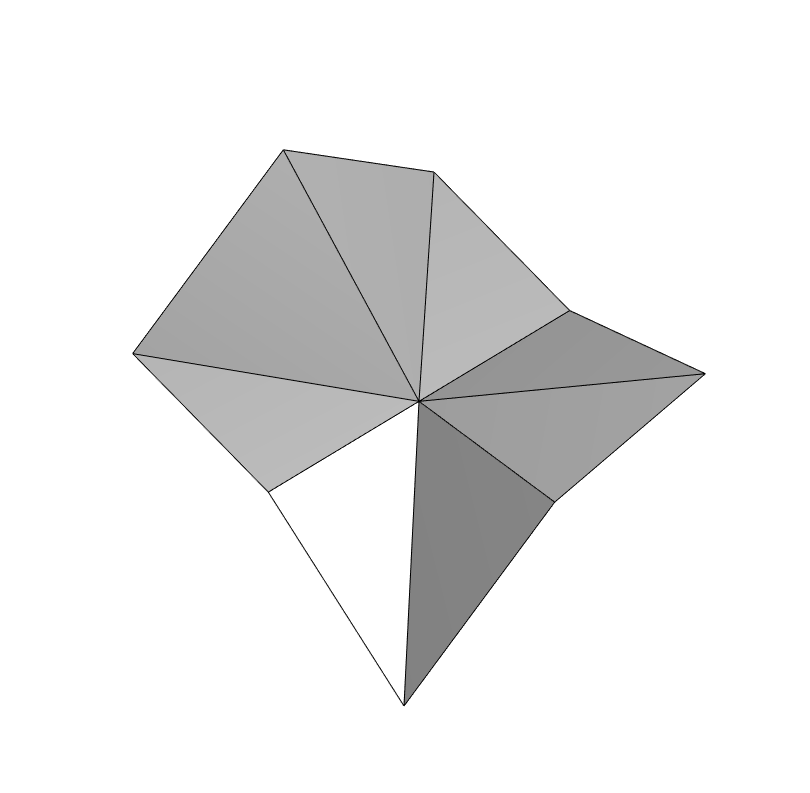}}
\subfloat[Small sphere]{\includegraphics[trim = 30 30 30 30, clip = true, width=0.32\textwidth]{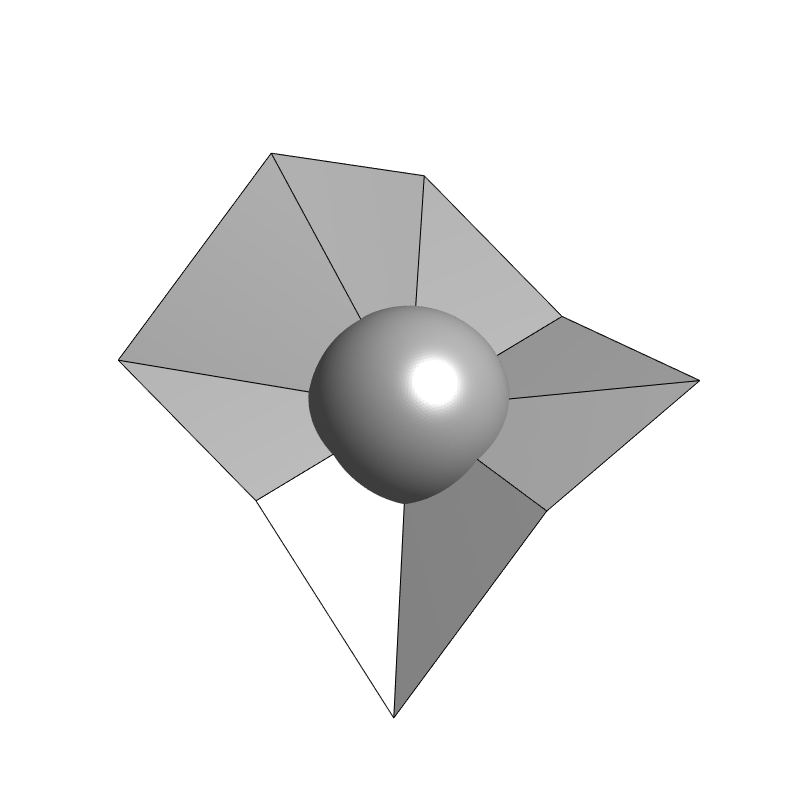}}
\subfloat[From above]{\includegraphics[width=0.32\textwidth]{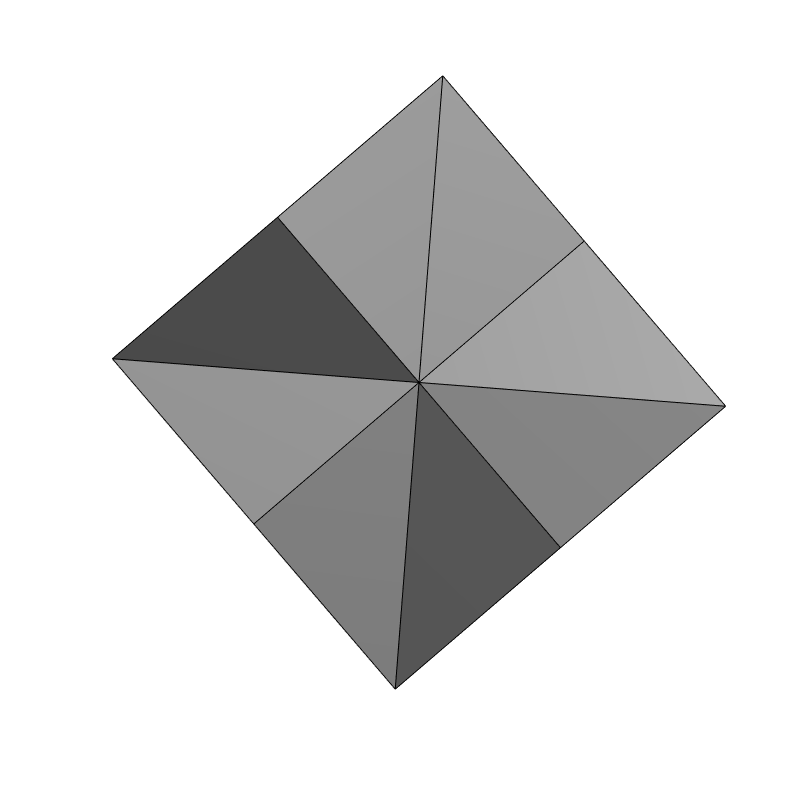}}
\caption{Depiction of the vertex triangles used in the computation of $\Gamma$. To compute $\Gamma$, we need to compute the fraction of surface area of the sphere in (B) that lies above the mesh. }
\label{fig:gamma}
\end{figure}

We work in spherical coordinates
\begin{equation}\label{eq:spherical}
x_1 = r\sin\vph \;\cos\tha, \qquad  x_2 = r\sin\vph \;\sin \tha, \qquad  x_3 = r\cos\vph.
\end{equation}
The edges $E_1,\ldots, E_k$ of the vertex triangles $T_1,\ldots, T_k$ containing the origin will be called \emph{vertex edges}, and we let  $(\tha_i,\vph_i)$ be their corresponding spherical angles.
We order the vertex edges and triangles so that \[0 \leq \tha_1 < \tha_2 < \tha_3 < \cdots < \tha_k < 2\pi,\]
and, for convenience, set $E_{k+1} = E_1$ with azimuthal angle $\tha_{k+1} = \tha_1 + 2\pi \geq 2\pi$. 
The vertex triangles are similarly ordered, so that $T_i$ has vertex edges $E_i$ and $E_{i+1}$.  Observe that the vertex edges  of $T_i$ are ordered so that $E_i,E_{i+1},\nu_i$ form a left-handed frame, keeping in mind that, by the preceding assumption, the normal $\nu_i$ points \emph{downwards}.

Each vertex triangle $T_i$ intersects the unit sphere $S_1 = \partial B_1$ along a curve 
\begin{equation}\label{eq:Ci}
C_i = \{\vph=h_i(\tha)\} = T_i \,\cap \, S_1
\end{equation}
connecting $(\tha_i,\vph_i)$ to $(\tha_{i+1},\vph_{i+1})$. In terms of the intersection curves $C_i$ we can compute
\begin{equation}\label{eq:gamma_formula1}
\Gamma = \frac{1}{4\pi}\sum_{i=1}^{k} \int_{\tha_i}^{\tha_{i+1}}\int_0^{h_i(\phi)}\sin\vph \, d\vph \,d\tha = \frac{1}{2} - \frac{1}{4\pi}\sum_{i=1}^{k} \int_{\tha_i}^{\tha_{i+1}}g_i(\tha) \, d\tha, \end{equation}
where
$$g_i(\tha) = \cos h_i(\tha) .$$
We can find an explicit formula for $g_i(\tha)$. Indeed, the face $T_i$ is described by the plane
\begin{equation}\label{eq:Tigraph}
x_3 = a_i x_1 + b_ix_2, \rox{where}a_i = -\,\frac{\nu^i_1}{\nu^i_3} \rox{and} b_i = -\,\frac{\nu^i_2}{\nu^i_3}.
\end{equation}
Therefore, the intersection curve \eqref{eq:Ci}
satisfies
\[g_i(\tha) = \cos h_i(\tha) = (a_i\cos\tha + b_i \sin\tha) \sin h_i(\tha),\]
and hence
\begin{equation}\label{eq:hphi}
h_i(\tha) = \coti\left( a_i\cos\tha + b_i\sin\tha \right).
\end{equation}
Noting the identity
\[\cos(\coti x) = \frac{x}{\sqrt{1 + x^2}},\]
we have
\begin{equation*}
g_i(\tha) =\frac{a_i\cos\tha + b_i\sin\tha}{\sqrt{1 + (a_i\cos\tha + b_i\sin\tha)^2}}.
\end{equation*}
Since 
$$a_i\cos\tha + b_i\sin\tha = c_i \cos(\tha - \delta _i), \roq{where}\delta_i =\atan{b_i}{a_i}, \quad c_i = \sqrt{a_i^2 + b_i^2},$$
 we can simplify the preceding formula to read
\begin{equation}\label{eq:zi}
g_i(\tha) =\frac{c_i\cos(\tha - \delta_i)}{\sqrt{1 + c_i^2\cos^2(\tha-\delta_i)}}.
\end{equation}
We note that $\atan{y}{x}$ is the two-argument $\arctan$ function, which gives the angle in radians between the positive $x$-axis and the ray from the origin to the point $(x,y)$, returning values in the interaval $[0,2\pi)$.
Integrating $g_i$ yields
\begin{equation}\label{eq:zi_anti}
\int g_i(\tha) \, d\tha = \arcsin\left(d_i\sin(\tha-\delta_i) \right) + \text{Constant},\rox{where }d_i =\frac{c_i}{\sqrt{1+c_i^2}}.
\end{equation}
This yields the following explicit formula:
\begin{equation}\label{eq:gamma_final}
\ \Gamma = \frac{1}{2} - \frac{1}{4\pi}\sum_{i=1}^{k}  \;\bbk{\arcsin(d_i\sin(\tha_{i+1}-\delta_i)) - \arcsin(d_i\sin(\tha_i-\delta_i))}.\ 
\end{equation}
Unwrapping the definitions we have
\[1 + c_i^2 = 1 + a_i^2 + b_i^2 = 1 + \frac{(\nu^i_1)^2}{(\nu^i_3)^2}+ \frac{(\nu^i_2)^2}{(\nu^i_3)^2}=\frac{1}{(\nu^i_3)^2},\roq{so} c_i^2 =\frac{(\nu^i_1)^2 + (\nu^i_2)^2 }{(\nu^i_3)^2}.\]
 It follows that
\begin{equation}\label{eq:di}
d_i = \sqrt{(\nu^i_1)^2 + (\nu^i_2)^2}.
\end{equation}
We also note that 
\begin{equation}\label{eq:alphai}
\delta_i = \atan{\nu^i_2}{\nu^i_1}, \qquad \tha_i = \atan{y_i}{x_i},
\end{equation}
where $(x_i,y_i,z_i)$ is any point along the edge $E_i$.

\subsection{Principal component analysis on local neighborhoods}
\label{sec:pk}

The spherical volume invariant of a surface in $\R^3$ is a robust estimator of its mean curvature, due to the asymptotic expansion given in  \eqref{eq:Vsrasym}. However, it gives no information about other differential geometric quantities of interest,
such as the second fundamental form, the individual principal curvatures,
the Gauss curvature,
or the directions of principal curvature.

To capture additional geometric information, we follow \cite{pottmann2007principal} and analyze the shape of the region $\Omega \capz B_r(p) $. In particular, it is suggested in \cite{pottmann2007principal} to perform principal component analysis (PCA) on this region, that is, we compute the eigenvalues $\lambda_1(p)\geq \lambda_2(p)\geq \lambda_3(p)$ of the $3 \times 3$ symmetric matrix\footnote{Here we take $x$ to be a column vector.}
\begin{equation}\label{eq:covM}
M_{S,r}(p) := \int_{\Omega \capz B_r(p) }(x-\bar{x}(p))(x-\bar{x}(p))^T \, dx,
\end{equation}
where 
\begin{equation}\label{eq:centroid}
\bar{x}(p) := \frac{1}{V_{S,r}(p)}\int_{\Omega \capz B_r(p) }x \, dx
\end{equation}
 is the centroid of $\Omega \capz B_r(p) $, cf.~\eqref{eq:Vsr}.  
Assuming $S$ is sufficiently smooth, it was shown in \cite{pottmann2007principal} that the eigenvalues  of $M_{S,r}(p)$ have the asymptotic expansions
\Eq{eq:eigenexpansions}
$$\eeq{\lambda_1(p) = \frac{2\pi}{15}r^5 - \frac{\pi}{48}\bbk{ 3\kappa_1(p) + \kappa_2(p) }r^6 + \O(r^7)\\
\lambda_2(p)= \frac{2\pi}{15}r^5 - \frac{\pi}{48}\bbk{ \kappa_1(p) + 3\kappa_2(p) }r^6 + \O(r^7)\\
\lambda_3(p) = \frac{19\pi}{480}r^5-\frac{9\pi }{512}\bbk{ \kappa_1(p) + \kappa_2(p) }r^6 + \O(r^7),}\roq{as \ \ $r \longrightarrow 0$,} 
$$
where $\kappa_1(p),\kappa_2(p)$ are the principal curvatures of the surface $S$ at the point $p\in S$, and, in the last formula, the $\O(r^6)$ term gives the mean curvature
$$H(p)=\tfrac{1}{2}\bbk{\kappa_1(p) + \kappa_2(p)}.$$  
Moreover, the first two corresponding eigenvectors $\v_1,\v_2$ are approximately tangent to the surface, and, assuming we are at a non-umbilic point, offer an $\O(r/|\kappa_1-\kappa_2|)$ approximation of the directions of principal curvatures, while $\v_3$ is approximately normal to the surface and is an $\O(r^2)$ approximation of the unit normal.  Thus, the matrix $M_{S,r}(p)$ provides a robust estimation of the second fundamental form of $S$ at a non-umbilic point $p$. 

Let us now show how to compute the matrix $M_{S,r}(p)$ via surface integrals, as we did for the spherical volume invariant $V_{S,r}(p)$ in Theorem \ref{thm:Vsr}. While these results are mainly of interest in dimension $n=3$, we carry out the derivation for an arbitrary dimension $n$. Noting that 
\begin{equation}\label{eq:covM2}
M_{S,r}(p)= \int_{\Omega \capz B_r(p) }(x-p)(x-p)^T \, dx - V_{S,r}(p)(\bar{x}(p)-p)\,(\bar{x}(p)-p)^T.
\end{equation}
it suffices to compute the first two moments
\begin{equation}\label{eq:moments}
m_i(p):=\int_{\Omega \capz B_r(p) } (x_i - p_i)\, dx , \qquad  c_{ij}(p):=\int_{\Omega \capz B_r(p) } (x_i-p_i)(x_j-p_j) \, dx,
\end{equation}
in terms of which the $(i,j)$ entry of $M_{S,r}(p)$ is given by
\begin{equation}\label{eq:Msr}
[M_{S,r}(p)]_{i,j} = c_{ij}(p) - \frac{1}{V_{S,r}(p)}m_i(p)\,m_j(p).
\end{equation}
The computation of $m_i(p)$ and $c_{ij}(p)$ in terms of surface integrals is relatively straightforward, compared to the computation of $V_{S,r}$. In what follows, $e_1,e_2,\dots,e_n$ denote the standard basis vectors in $\R^n$, and $\delta_{ij}$ is the Kronecker delta.

\begin{lemma}\label{lem:mi}
Let us abbreviate $y=x-p$.
Then, for any $1\leq i,j\leq n$, we have
\begin{equation}\label{eq:mi}
m_i(p) = \frac{1}{n+1}\int_{S \capz B_r(p) } (y_i y - r^2e_i)\cdot \nu\, dS(x).
\end{equation}
and
\begin{equation}\label{eq:cij}
c_{ij}(p) = \frac{r^2}{n+2} V_{S,r}(p)\delta_{ij} + \frac{1}{2n+4}\int_{S \capz B_r(p) } (2y_iy_j y - r^2(y_je_i + y_ie_j))\cdot \nu\, dS(x).
\end{equation}
\end{lemma}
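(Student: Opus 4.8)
The plan is to mimic the derivation of Theorem \ref{thm:Vsr}, but the computation is substantially simpler here because the relevant vector fields will be \emph{polynomial}, hence smooth across the origin, so no singular point source, principal value, or $\Gamma(p)$ term is needed. As before, translate so that $p=0$ and write $y=x$; the two claims become surface-integral formulas for $\int_{\Omega\cap B_r}y_i\,dx$ and $\int_{\Omega\cap B_r}y_iy_j\,dx$.

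For each integrand I would first produce a vector field whose divergence equals it. By Euler's identity, for a homogeneous polynomial $P$ of degree $d$ one has $\div(P\,y)=(\nabla P\cdot y)+nP=(n+d)P$. Taking $P=y_i$ (so $d=1$) gives $\div\!\left(\tfrac{1}{n+1}\,y_iy\right)=y_i$, and taking $P=y_iy_j$ (so $d=2$) gives $\div\!\left(\tfrac{1}{n+2}\,y_iy_jy\right)=y_iy_j$. Applying the Divergence Theorem on the Lipschitz domain $\Omega\cap B_r$ then expresses $m_i(p)$ and $c_{ij}(p)$ as a surface integral over $S\cap B_r$ plus an unwanted integral over $\Omega\cap\partial B_r$, exactly as in \eqref{eq:svi_alt}.

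The key step is to add a correction field $\W$ that cancels the $\partial B_r$ contribution. On $\partial B_r$ the outward normal is $\nu=y/r$, so $y\cdot\nu=r$ and $e_k\cdot\nu=y_k/r$. For the first moment I would take the constant (hence divergence-free) field $\W=-\tfrac{r^2}{n+1}\,e_i$; then the combined normal trace $\tfrac{1}{n+1}(y_iy-r^2e_i)\cdot\nu=\tfrac{1}{n+1}(ry_i-r^2\,y_i/r)$ vanishes on $\partial B_r$, and the Divergence Theorem yields \eqref{eq:mi} immediately. For the second moment I would take the symmetric field $\W=-\tfrac{r^2}{2(n+2)}(y_je_i+y_ie_j)$ (recall $2n+4=2(n+2)$); a short substitution again shows the combined normal trace vanishes on $\partial B_r$. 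The decisive feature is that this $\W$ is \emph{not} divergence-free: $\div\W=-\tfrac{r^2}{n+2}\delta_{ij}$. Consequently the interior integral of $\div(\frac{1}{n+2}y_iy_jy+\W)$ picks up the extra term $-\tfrac{r^2}{n+2}\delta_{ij}\int_{\Omega\cap B_r}dx=-\tfrac{r^2}{n+2}\delta_{ij}\,V_{S,r}(p)$, which, moved to the other side, produces precisely the leading volume term in \eqref{eq:cij}.

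The only points requiring genuine care are elementary. One must verify by direct substitution (using $y\cdot\nu=r$ and $e_k\cdot\nu=y_k/r$) that the two $\partial B_r$ boundary integrands are identically zero, and invoke the Divergence Theorem for Lipschitz domains applied to the smooth fields $\frac{1}{n+1}y_iy+\W$ and $\frac{1}{n+2}y_iy_jy+\W$. Unlike Theorem \ref{thm:Vsr}, integrability near $p$ is automatic, since the surface integrands decay like $\O(|y|^2)$ and $\O(|y|^3)$ as $x\to p$. The main (and minor) obstacle is purely bookkeeping: recognizing that the symmetric correction field needed for $c_{ij}$ unavoidably carries a nonzero isotropic divergence $-\tfrac{r^2}{n+2}\delta_{ij}$, which is exactly what supplies the $\tfrac{r^2}{n+2}V_{S,r}(p)\delta_{ij}$ term, rather than attempting to force $\W$ to be divergence-free as in the simpler $m_i$ case.
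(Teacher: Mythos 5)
Your proof is correct and takes essentially the same approach as the paper: your combined fields $\tfrac{1}{n+1}(y_iy-r^2e_i)$ and $\tfrac{1}{2n+4}\bigl(2y_iy_jy-r^2(y_je_i+y_ie_j)\bigr)$ are exactly the vector fields the paper writes down directly, with the same normal-trace cancellation on $\partial B_r$ (using $\nu=y/r$) and the same isotropic divergence term $-\tfrac{r^2}{n+2}\delta_{ij}$ supplying the $\tfrac{r^2}{n+2}V_{S,r}(p)\delta_{ij}$ contribution. (One immaterial slip: the $m_i$ surface integrand is only $\O(1)$ near $p$, not $\O(|y|^2)$, since $-r^2e_i\cdot\nu$ does not vanish there; but boundedness of the polynomial fields is all that integrability requires.)
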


\begin{proof}
Without loss of generality, we may assume $p=0$. Then $y=x$ and we write $B_r=B_r(p)=B_r(0)$. We first prove \eqref{eq:mi}. Define the vector field 
$$\V(x) = \frac{x_ix - r^2e_i}{n+1} \roq{so that}\diva\V = x_i.$$
 By the Divergence Theorem,
\[m_i = \int_{\Omega \capz B_r } \diva\V\, dx = \int_{S \capz B_r } \V(x)\cdot \nu\, dS + \int_{\partial \Omega \capz B_r } \V(x)\cdot \nu \, dS.  \]
On the spherical portion of the boundary $\partial \Omega \capz B_r $, we have $\nu =x/r$ and  so
\[\V(x)\cdot \nu = \frac{1}{r}\,\V(x)\cdot x = \frac{x_i(|x|^2 - r^2)}{(n+1)\,r} = 0\]
since $|x|^2=r^2$ on $\partial B_r$. This completes the proof of \eqref{eq:mi}.

We now prove \eqref{eq:cij}. Define the vector field 
$$\W(x) = \frac{2x_ix_j x - r^2(x_je_i + x_ie_j)}{2n+4},\roq{whereby} 
\diva\W = x_ix_j - \frac{1}{n+2}\,r^2 \delta_{ij}.$$
By the Divergence Theorem, we have
\begin{align*}
c_{ij} &= \int_{\Omega \capz B_r } \left(\frac{1}{n+2}\,r^2\,\delta_{ij} + \diva\W \right) dx \\
 &=\frac{1}{n+2}\,r^2\, \delta_{ij}V_{S,r} + \int_{S \capz B_r } \W(x)\cdot \nu\, dS + \int_{\partial \Omega \capz B_r } \W(x)\cdot \nu \, dS.  
\end{align*}
On the  portion of the boundary $x\in\partial \Omega \capz B_r $
\[\W(x)\cdot \nu = \frac{1}{r}\,\W(x)\cdot x = \frac{2x_ix_jr^2 - r^2(x_jx_i + x_ix_j)}{(2n+4)\,r} = 0,\]
which completes the proof.
\end{proof}

\section{Implementation}
\label{sec:implementation}

Let us next discuss how to compute the surface integrals from Theorem \ref{thm:Vsr} and Lemma \ref{lem:mi} on a surface given as a triangulated mesh, which is often the case in practice. The integrals we wish to compute all have the form
\begin{equation}\label{eq:compuc}
\int_{S\cap B_r(p)} f(x) \, dS
\end{equation}
for various choices of kernel function $f(x)$. We adopt the convention that $f(x)=0$ if $|x-p|>r$, and hence rewrite \eqref{eq:compuc} as simply
\begin{equation}\label{eq:compu}
\int_{S} f(x) \, dS.
\end{equation}

Let $T_1,\dots,T_M$ denote the triangles in the triangulated surface $S$. Then we can write
\begin{equation}\label{eq:split}
\int_{ S} f(x) \, dS =\sum_{m=1}^M \ \int_{ T_m}f(x) \, dS.
\end{equation}
We show in Sections \ref{sec:analytic} and \ref{sec:boundary} that the triangular integrals appearing in the summation can be computed analytically for all of the kernels $f$ used in this paper.
Let us note that on the right hand side of \eqref{eq:split}, we need only sum over triangles $T_m$ that have non-empty intersection with $B_r(p)$. However, it is computationally expensive to perform a range search to find all such triangles, especially for large meshes. In our implementation, we instead perform a depth first search on the triangle graph of the mesh, starting at any triangle adjacent to $p$, and terminating when all triangles in the \emph{connected component} of $S\cap B_r(p)$ containing $p$ are found. While the depth first search has linear complexity and is very fast in practice, it will fail to find any additional connected components of $S\cap B_r(p)$ that do not contain $p$. On the other hand, this may be a desirable property of the algorithm, especially if one is primarily interested in the \emph{local} geometry of the mesh.

\subsection{Analytic integration over triangles}
\label{sec:analytic}

Let us show how \emph{all} the integrals considered in this paper can be computed analytically over triangles $T_m\subset B_r(p)$. 
For simplicity, we take $p=0$, write $B_r=B_r(0)$, and consider a triangle $T$.

For the spherical volume invariant, for any triangle $T$ with $T\subset B_r$ the surface integral \eqref{eq:Vform} from Theorem \ref{thm:Vsr} requires us to compute
\[A:=\frac{1}{3}\int_{T}\left( 1-\frac{r^3}{|x|^3} \right)x\cdot \nu \, dS.\]
Since $x\cdot \nu$ is constant over the triangle $T$, we have
\[A = \frac{1}{3}\,z\cdot \nu\left( |T| - r^3\int_{T}\frac{1}{|x|^3}\, dS \right),\]
where $z$ is any point belonging to $T$, such as its centroid or one of its vertices, while $|T|$ denotes the surface area of $T$. The remaining integrand $|x|^{-3}$ is known as a \emph{hypersingular} kernel, and arises, for instance, in the boundary element method for solving partial differential equations \cite{banerjee1981boundary}.  The integral of this hypersingular kernel over any planar triangle can be computed analytically \cite{nintcheu2009explicit} provided $p=0\not\in T$, which we may freely assume since $z\cdot \nu=0$ when $0\in T$. For convenience, we recall the analytic formula, which is rather tedious and derived in \cite{nintcheu2009explicit}, in Appendix \ref{sec:hyper}.  

For PCA on local neighborhoods, the integrals we need to compute from Lemma \ref{lem:mi} correspond to
\begin{equation}\label{eq:miint}
\frac{1}{4}\int_{T}(x_ix - r^2 e_i)\cdot \nu \, dS, \roq{and}\frac{1}{10}\int_{T} \bbk{2x_ix_jx - r^2 (x_je_i+x_ie_j)}\cdot \nu\,  dS.
\end{equation}
Since $x\cdot \nu$ and $e_i\cdot \nu$ are constant over $T$, we just need to compute the quantities 
\begin{equation}\label{eq:quant}
a_i:=\int_T x_i\, dS, \qquad  b_{ij}:=\int_T x_ix_j \, dS.
\end{equation} 
Let us denote  the vertices of $T$ by $x,y,z\in \R^3$. The first integrand in \eqref{eq:quant} is linear, and so the integral can be computed analytically with the three point stencil
\begin{equation}\label{eq:ai}
a_i = \frac{1}{3}\,|T|\,(x_i + y_i + z_i).
\end{equation}
For $b_{ij}$, we compute the integral in barycentric coordinates
\begin{align*}
b_{ij} &=2\,|T|\int_0^1 \int_0^{1-t} ((1-s-t)x_i +sy_i + tz_i)((1-s-t)x_j +sy_j + tz_j)\, ds dt, \\
&=2\,|T|\Bigg[\int_0^1 \int_0^{1-t} (1-s-t)^2 x_ix_j + s^2y_iy_j + t^2 z_iz_j + st(y_iz_j + y_jz_i)\\
&\hspace{4cm}+ (1-s-t)s(x_iy_j + y_jy_i) + (1-s-t)t(x_iz_j + z_jz_i)\, dsdt\Bigg]
\end{align*}
Computing
\[\int_0^1\int_0^{1-t} (1-s-t)^2\, dsdt =\int_0^1\int_0^{1-t} s^2\, dsdt =\int_0^1\int_0^{1-t} t^2\, dsdt = \frac{1}{12},\]
and
\[\int_0^1\int_0^{1-t} (1-s-t)s\, dsdt =\int_0^1\int_0^{1-t} (1-s-t)t\, dsdt =\int_0^1\int_0^{1-t} st\, dsdt = \frac{1}{24},\]
we have
\begin{equation}\label{eq:bij}
b_{ij} = \frac{1}{12}|T|(2x_ix_j + 2y_iy_j + 2z_iz_j + x_iy_j + x_jy_i + x_iz_j + x_jz_i + y_iz_j + y_jz_i).
\end{equation}
If we were to denote the vertices by $v_i = (v_i^1,v_i^2,v_i^3)$, say, then \eqref{eq:bij} would have the simple form
$$b_{ij} = \frac{1}{12}\,|T| \sum_{p,q=1}^3 v_i^pv_j^q,$$
and similarly for \eqref{eq:ai}.

\subsection{Boundary triangles}
\label{sec:boundary}

For triangles $T$ that have a non-empty intersection with the boundary $\partial B_r$ of the ball $B_r$, the integral over $T$ cannot be computed analytically. To determine whether a triangle $T$ intersects $\partial B_r$, we compute 
$$r_1:=\min_{x\in T} |x|, \qquad r_2:=\max_{x\in T} |x|,$$
 and check whether $r_1\leq r \leq r_2$. To compute $r_2$, it is sufficient to check the vertices of the triangle, since $x\mapsto |x|$ is convex. The computation of $r_1$ is more tedious, since the minimum distance may occur interior to $T$. To compute $r_1$ we orthogonally project the origin $p=0$ onto the plane containing the triangle $T$, calling the projection $x_P$. If $x_P\in T$, then $r_1=|x_P|$. If $x_P\not\in T$, then we find the closest point $x_T\in T$ to the projection $x_P$, and therefore $r_1^2 = |x_T|^2 + |x_T-x_P|^2$ by the Pythagorean Theorem. 

To compute the integral over such boundary triangles, we fix a maximum desired side length $\ell>0$ and recursively bisect the triangle along the line segment connecting the midpoint of its longest side with the opposing vertex.  We stop the bisection procedure on a given subtriangle $T_s$ if $T_s\cap \partial B_r = \varnothing$, or the maximum side length of $T_s$, denoted $L(T_s)$, falls below $\ell$. See Figure \ref{fig:bisection} for an illustration of the bisection process. We compute the integration over $T_s$ analytically if $T_s\subset B_r$, or with the approximation 
\begin{figure}
\centering
\includegraphics[width=0.6\textwidth]{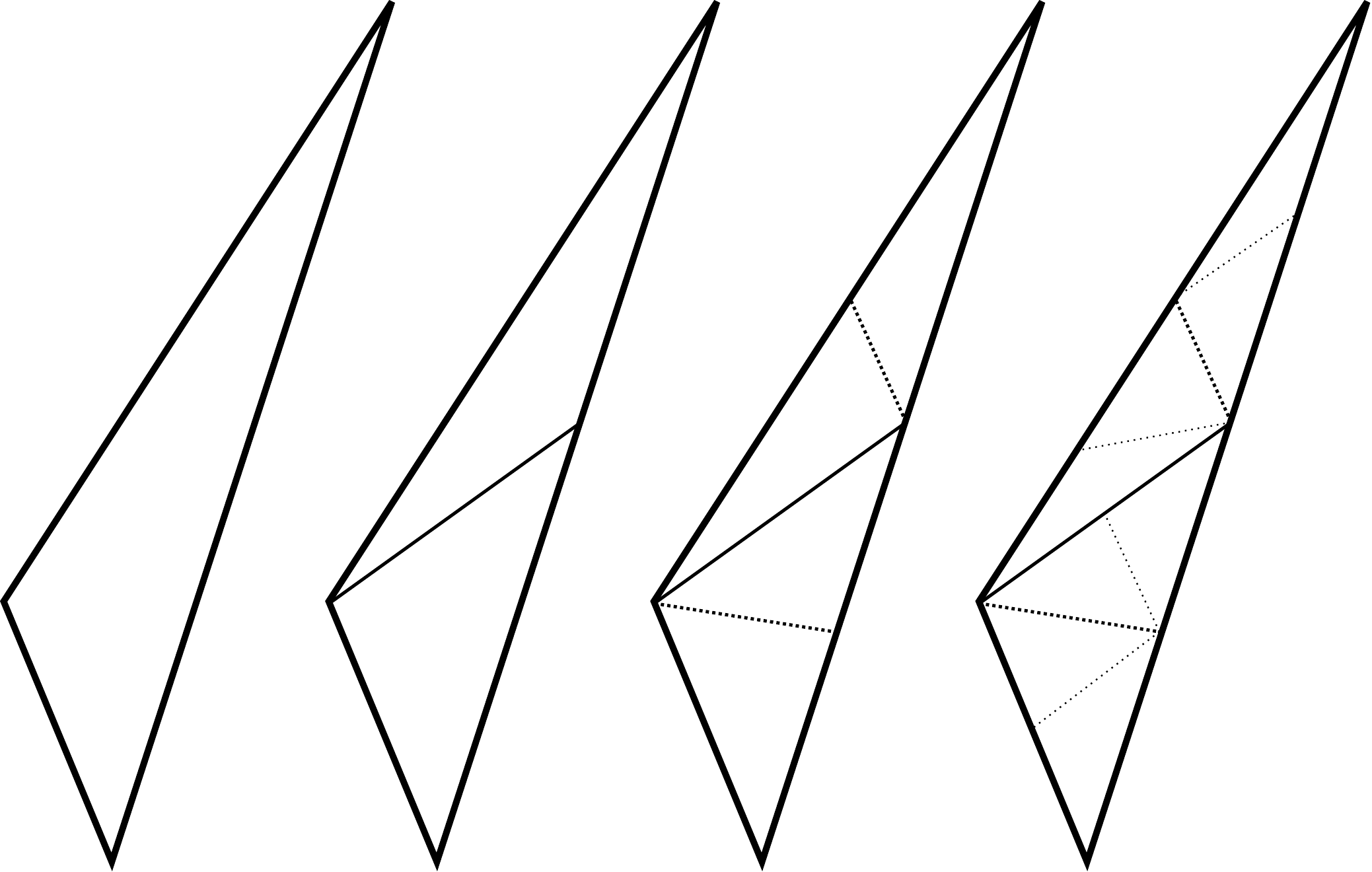}
\caption{Illustration of the bisection process. The triangle on the left is recursively bisected three times from left to right. Each bisection splits a triangle into two along the line segment between the midpoint of the longest side and the opposing vertex, generating two triangles of equal area. Hence, each sub-triangle on the right has exactly $1/8$ of the area of the original triangle.}
\label{fig:bisection}
\end{figure}

\begin{equation}\label{eq:approxbasic}
\int_{T_s} f\, dS \approx |T_s|\,f\left(\frac{x+y+z}3\right),
\end{equation}
if $T_s\cap \partial B_r \not= \varnothing$, where $x,y,z$ are the vertices of $T_s$. We note the approximation error is bounded by
\begin{equation}\label{eq:intapprox}
\left|\int_{T_s} f\, dS - |T_s|\,f\left(\frac{x+y+z}3\right)\right| \leq |T_s|\,\osc{T_s}f,
\end{equation}
where 
\[\osc{T_s} f := \max_{T_s}f - \min_{T_s} f\]
denotes the oscillation of the function $f$ over the triangle $T_s$.
Now, let $\delta>0$ so that 
\begin{equation}\label{eq:deltadef}
\text{osc}_{T_s} f \leq \delta \ \text{ whenever } T_s\cap \partial B_r\not= \varnothing \text{ and }L(T_s)\leq \ell.
\end{equation}
Note that $\delta$ is reduced throughout the bisection procedure, since the diameter of triangles that intersect $\partial B_r$ is decreasing.
Since the triangles $T_s$ in \eqref{eq:deltadef} belong to $S\cap B_{r+\ell}\setminus B_{r-\ell}$, the error in computing \eqref{eq:compu} is bounded by
\begin{equation}\label{eq:boundTs}
|S\cap B_{r+\ell}\setminus B_{r-\ell}|\,\delta,
\end{equation}
where $|S\cap A|$ denotes the surface area of $S\cap A$.
We assume that for the mesh $S$, there exists a constant $C>0$, independent of $r$ and $\ell$, such that 
\begin{equation}\label{eq:volgrowth}
|S\cap B_{r+\ell}\setminus B_{r-\ell}|\leq C\left[\,\pi(r+\ell)^2 - \pi(r-\ell)^2\,\right] = 2\,C\,\pi \,r\,\ell.
\end{equation}
Therefore, our approximation error is at most
\begin{equation}\label{eq:ferror}
\text{Integration error} \leq 2\,C\,\pi \,r\,\ell\,\delta,
\end{equation}
where $\delta$ is defined in \eqref{eq:deltadef}. We note the volume growth assumption \eqref{eq:volgrowth} is convenient, in that it leads to a simple form for the integration error \eqref{eq:ferror}. However, the analysis below can be easily carried out with other assumptions in place of \eqref{eq:volgrowth}, if needed. The volume growth assumption is true for smooth surfaces, with $r>0$ small, and hence for any triangulated mesh that well-approximates a smooth surface. 

The application of \eqref{eq:ferror} depends on the context. For the spherical volume invariant, we have 
\[f(x) = -\frac{1}{3}\left( \frac{r^3}{|x|^3}-1 \right)_+(x\cdot \nu),\]
where $a_+:=\max\{a,0\}$. For any triangle $T_s$ with maximum side length less than $\ell$ and satisfying $T_s\cap \partial B_r\not =\varnothing$, we have 
\[\osc{T_s} f \leq \frac{r^4\ell}{(r-\ell)^4}.\]
Thus, $\delta$ from \eqref{eq:deltadef} can be chosen as $\delta = r^4\ell/(r-\ell)^4$. Since the spherical volume invariant scales with $r^3$, it is reasonable to select an error tolerance $\eps>0$ and ask that the integration error is bounded by $\eps\,r^3$. Thus, invoking \eqref{eq:ferror} we find that $\ell$ should be selected so that $\ell < r$ and 
\begin{equation}\label{eq:lsvi}
\ell^{\, 2} \leq \frac{\eps(r-\ell)^4}{r^2}.
\end{equation}
Note that we are discarding the constant $2C\pi$ in \eqref{eq:volgrowth}, since we are only interested in how $\ell$ should scale with $r$ and $\eps$. If $\eps \ll 1$ so that $\ell\ll r$, this condition can be approximated by $\ell^{\, 2}\leq \eps\,r^2$. In particular, the triangle refinement is more important for small radii $r>0$, and for sufficiently large $r>0$, no refinement is needed.

For PCA on local neighborhoods, we have two integrals to compute. The first \eqref{eq:mi} corresponds to
\[f(x) = \frac{1}{5}\begin{cases}
( x_ix - r^2e_i)\cdot \nu,\quad\,&\text{if }|x|\leq r\\
0,&\text{otherwise.}
\end{cases}\]
Since $f$ is not Lipschitz, the oscillation bound is at best
\[\osc{T_s} f \leq 2\max_{B_{r+\ell}\setminus B_{r-\ell}}|f| \leq 2r^2, \]
provided $\ell\leq r$. Thus, $\delta$ from \eqref{eq:deltadef} can be chosen as $\delta=2r^2$. Inspecting \eqref{eq:covM2}, we see that it is reasonable to ask that the integration error is bounded by $\eps\,r^4$, for an error tolerance parameter $\eps>0$. Combining this with \eqref{eq:ferror} the restriction on $\ell$ becomes $\ell\leq \min\{\eps\,r,r\}$.

The second integral \eqref{eq:cij} required by PCA on local neighborhoods corresponds to
\[f(x) = \frac{1}{10}\begin{cases}
\bbk{2x_ix_jx - r^2(x_je_i + x_ie_j)}\cdot \nu,\quad\,&\text{if }|x|\leq r\\
0,&\text{otherwise.}
\end{cases}\]
As before, we bound the oscillation by
\[\osc{T_s} f \leq 2\max_{B_{r+\ell}\setminus B_{r-\ell}}|f| \leq 4\,r^3, \]
provided $\ell\leq r$, and so $\delta=4\,r^3$. By \eqref{eq:covM2} we see that it is natural to bound the integration error by $\eps\,r^5$, yielding again the condition $\ell\leq \min\{\eps\,r,r\}$.

\section{Numerical experiments}
\label{sec:numerics}

We now present the results of numerical experiments using our method to compute the spherical volume invariant for triangulated surfaces arising from standard images, and for real experimental data arising from a project to classify and reassemble broken bone fragments in an archaeological context. For brevity, we will not discuss the much simpler case of curves and the circular area invariant. 
Our code is written in C and can be run from Matlab via the MEX interface, and from Python via an extension module. The code is available for download on GitHub:
 \hfill\break \hglue1.27in \href{https://github.com/jwcalder/Spherical-Volume-Invariant}{https://github.com/jwcalder/Spherical-Volume-Invariant}

We first consider the standard test case of the Stanford dragon \cite{curless1996volumetric}. Figure \ref{fig:SVIplotsDragon} shows the spherical volume invariant for radii $r=1,2,5$ computed on the dragon. In Figure \ref{fig:SVIplotsDragon}, and in all other plots below (unless otherwise specified), the colors indicate the values of the spherical volume invariant, with red indicating the lowest value and blue corresponding to the highest.  For the dragon, and all other experiments, we used an error tolerance of $\eps= 1$ for bisecting boundary triangles. In the case of the dragon, the maximum triangle bisection depth was 8 and the maximum number of sub-triangles in any refinement was 57.

\begin{figure}
\centering
\subfloat{\includegraphics[width=0.32\textwidth,clip=true,trim=150 280 150 200]{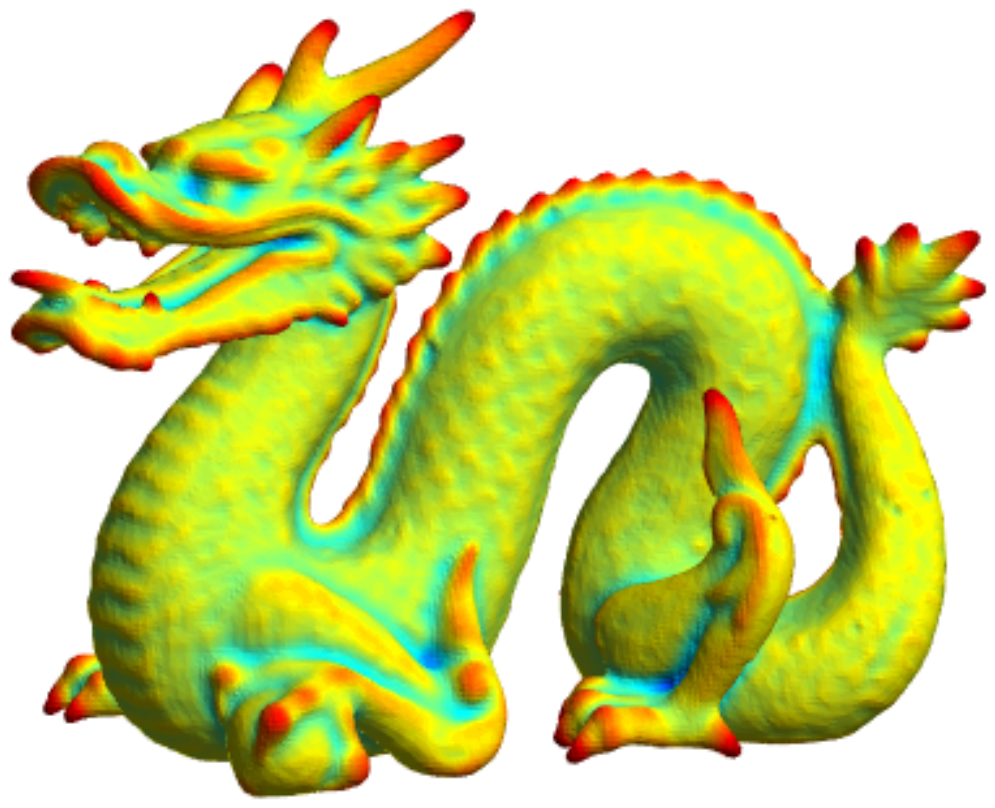}}
\subfloat{\includegraphics[width=0.32\textwidth,clip=true,trim=150 280 150 200]{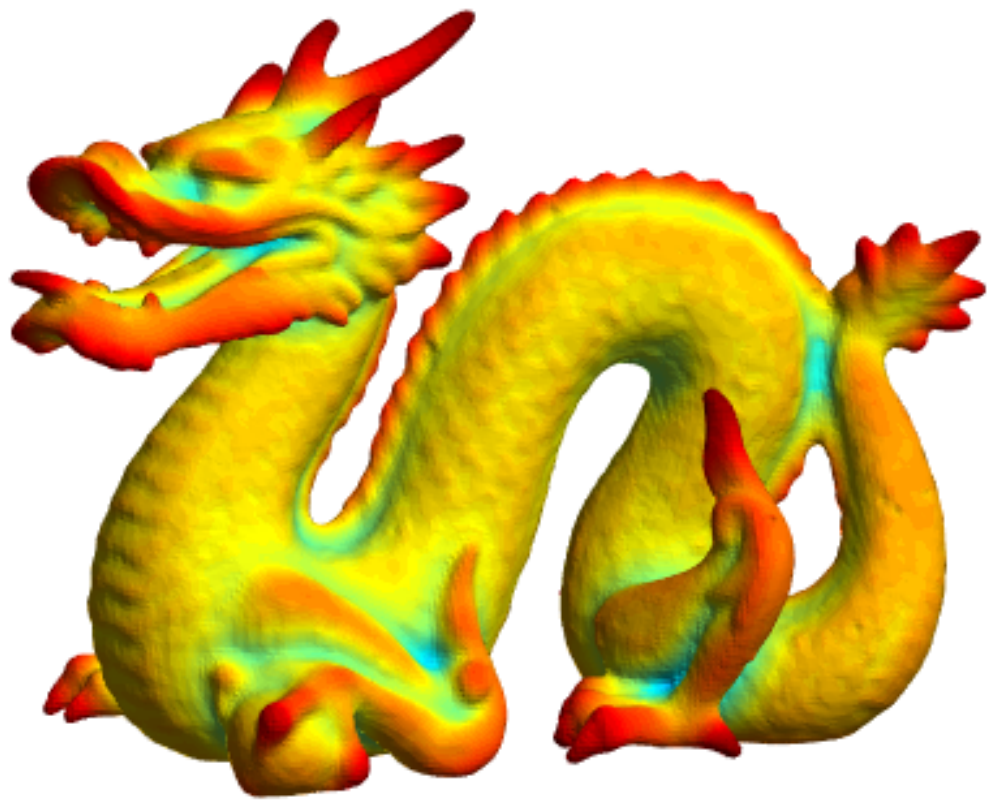}}
\subfloat{\includegraphics[width=0.32\textwidth,clip=true,trim=150 280 150 200]{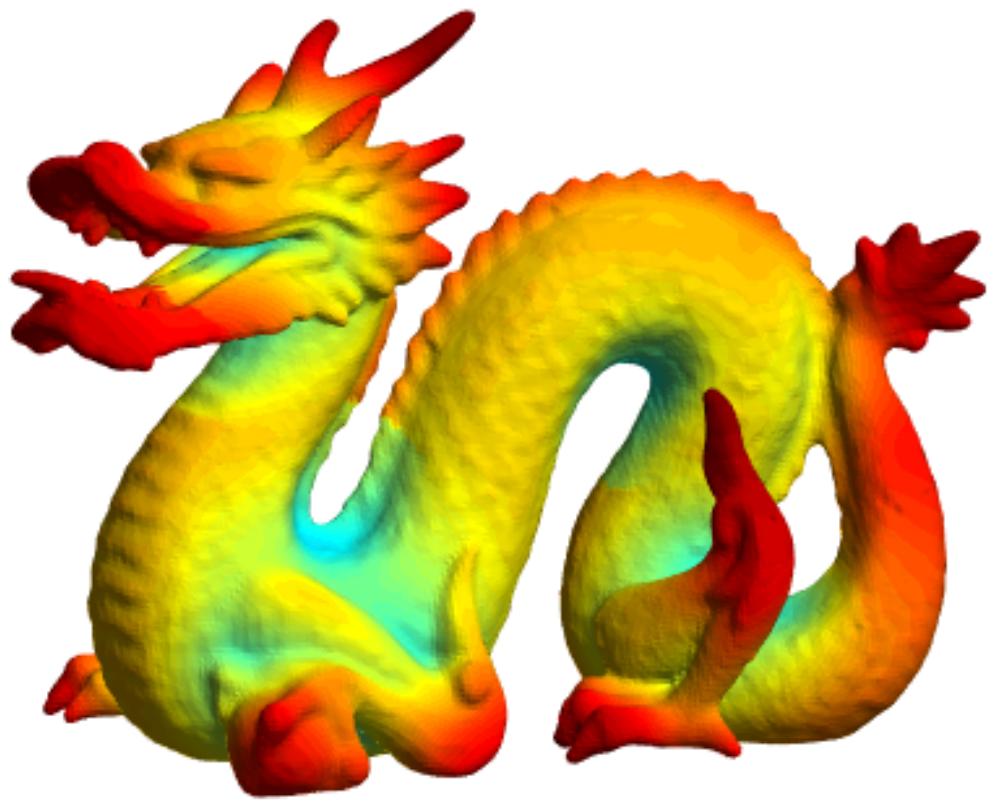}}
\caption{Spherical volume invariant for Stanford dragon \cite{curless1996volumetric} computed at radii of 1, 2, and 5.}
\label{fig:SVIplotsDragon}
\end{figure}

We mention that the original version of the Stanford dragon exhibits some non-manifold geometry. In particular, there are stray vertices not connected to triangles, and some edges are shared by more than 2 triangles. On such meshes, our method can produce unpredictable results, such as negative values for volumes, since Theorem \ref{thm:Vsr} no longer holds. This can be easily remedied by cleaning the mesh with any standard mesh software package before running our code, or obtaining the mesh from a reliable algorithm, such as isosurfacing. For our experiment with the Stanford dragon reported in Figure \ref{fig:SVIplotsDragon}, we obtained a version of the Stanford dragon in PWN (Points with Normals) format from \cite{pwn2ply}, and converted to a clean triangulated mesh using the code provided in \cite{pwn2ply}.

Our method is computationally efficient for large meshes. Table \ref{tab:cputime} shows the wall-clock times\footnote{``Wall-clock'' time refers to the actual amount of time taken to perform the operation, as opposed to CPU time, which is often used to refer to how much time the processor spent on the job.} for computing the spherical volume invariant with our method on the dragon for various radii. We also include results for lower resolution versions of the dragon for comparison. We can see the complexity of our method scales quadratically with the radius $r$, as expected. These CPU times are comparable to the FFT methods reported in \cite{pottmann2007principal}.  We note FFT methods require coarsely discretizing the ambient space, resulting in larger numerical errors. 
\begin{table}[!t]
\centering
\begin{tabular}{|c|c|c|c|c|c|c|}
\hline
\textbf{Mesh size}&\multicolumn{6}{c|}{\textbf{Radius}}\\
\hline
 (\# triangles/\#vertices) &$r=0.5$ &$r=1$ & $r=2$ & $r=3$ & $r=4$  & $r=5$  \\
\hline
45,360/22,678 & 0.19s & 0.69s & 2.5s & 6.1s & 10.3s & 16.8s  \\
\hline
90,722/45,359 & 0.67s & 2.1s & 8.9s & 26.2s & 40.7s  & 66.7s \\
\hline
181,444/90,720 & 2.0s & 7.8s & 32.8s & 83.3s & 151.4s &  268.4s\\
\hline
\end{tabular}
\vspace*{5pt}
\caption{Wall-clock times for computing the spherical volume invariant on the Stanford dragon \cite{curless1996volumetric} with $\eps=1$ for boundary triangle refinement. Computations were performed on a standard laptop computer using a single $3.2$ GHz core, and CPU times were found to be very similar with and without boundary triangle refinement. For reference, on the finest mesh (181,444 triangles) with $r=5$, each ball $B(p,r)$ contains on average 7,384.3 triangles.}
\label{tab:cputime}
\end{table}
 


Let us next compute the spherical volume invariant on broken bone fragments that have been scanned and digitized for anthropological applications, as outlined in the introduction. Figure \ref{fig:SVIplots1} shows the spherical volume invariant plotted over bone fragments at different radii, demonstrating how varying the radius allows one to change the scale of detected features. Here, the values of the spherical volume invariant are normalized with a power-law correction $v\mapsto v^p$, with $p=.5$ unless otherwise stated, to maximize contrast for visualization. We note that for larger radii in Figure \ref{fig:SVIplots1}, the spherical volume appears to be discontinuous at distance $r$ from a sharp fracture edge, which may be a desirable feature, depending on the application. This is due to our use of the \emph{connected component} of $B(p,r)\cap S$ in computations, which fails to explore the opposite side of the fragment if $B(p,r)$ does not intersect the fracture edge. We also computed the principal curvatures via PCA on local neighborhoods. Figure \ref{fig:Gauss} shows the Gauss curvature and Figures \ref{fig:K1} and \ref{fig:K2} show the two principal curvatures for some of the fragments. We did not include figures for mean curvature, since they are identical to those in Figure \ref{fig:SVIplots1} for the spherical volume invariant, except with the colors reversed.

\begin{figure}
\centering
\hspace{-5mm}
\subfloat{\includegraphics[height=0.25\textheight,angle=30,clip=true,trim=220 55 180 40]{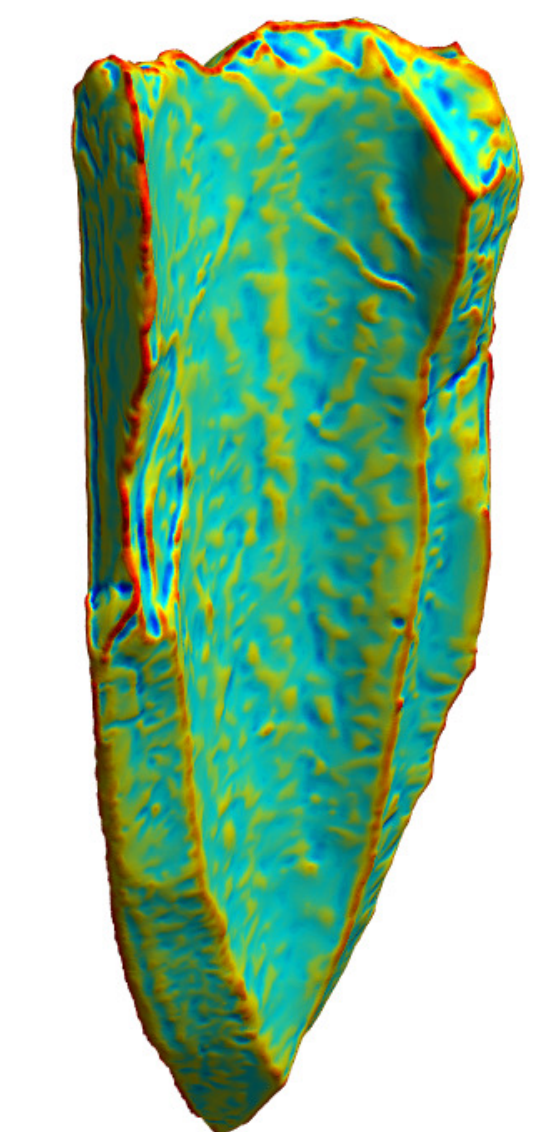}}
\subfloat{\includegraphics[height=0.25\textheight,angle=30,clip=true,trim=220 55 180 40]{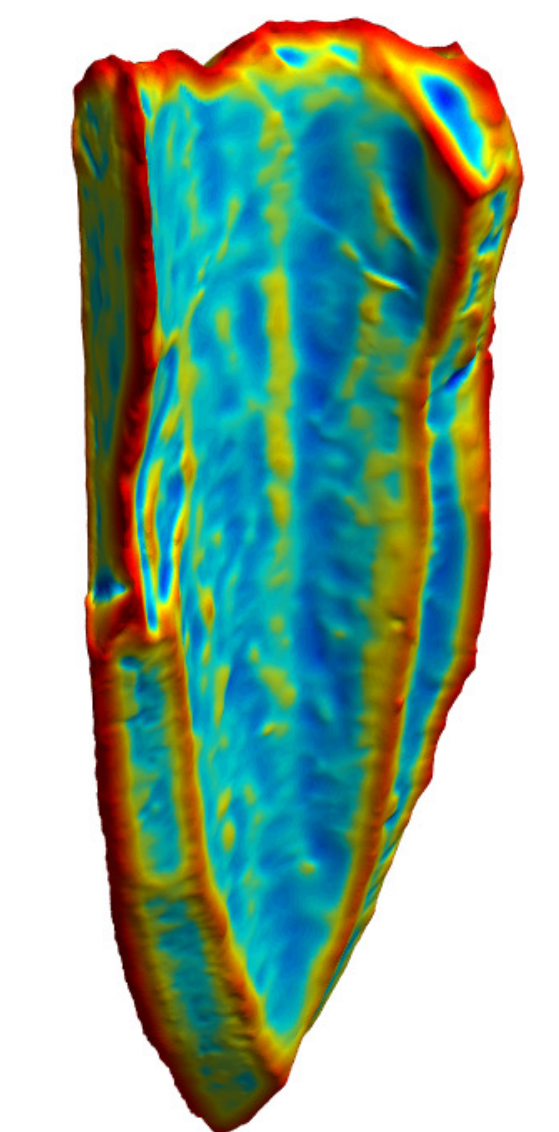}}
\subfloat{\includegraphics[height=0.25\textheight,angle=30,clip=true,trim=220 55 180 40]{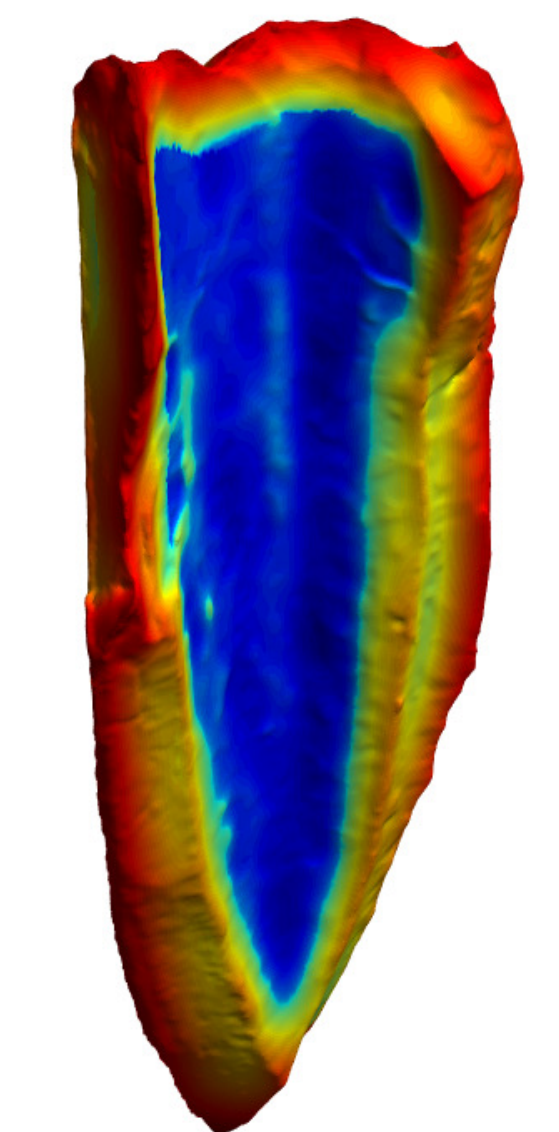}}\\
\hspace{8mm}
\subfloat{\includegraphics[height=0.225\textheight,clip=true,trim=140 50 100 30]{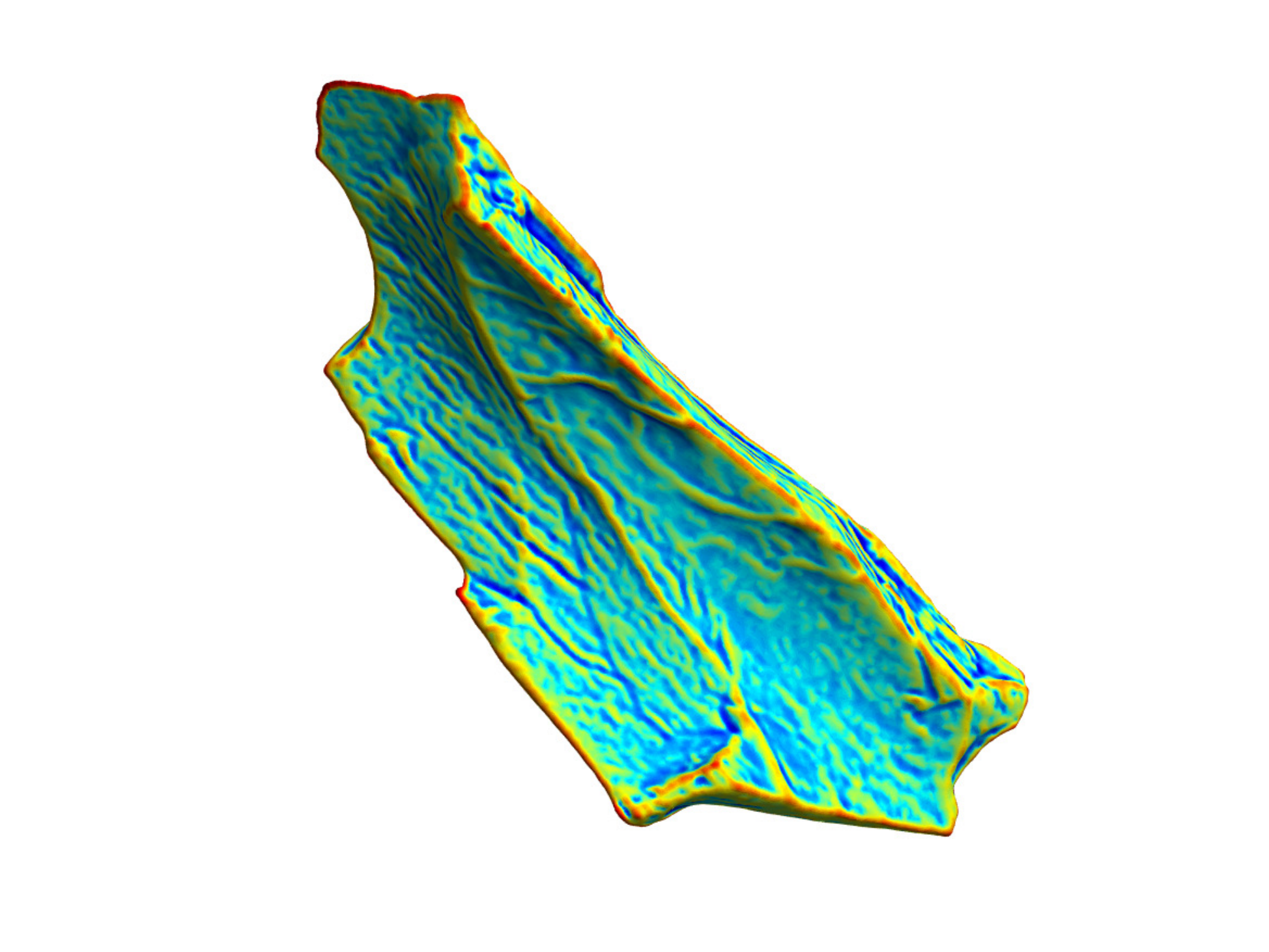}}
\subfloat{\includegraphics[height=0.225\textheight,clip=true,trim=140 50 100 30]{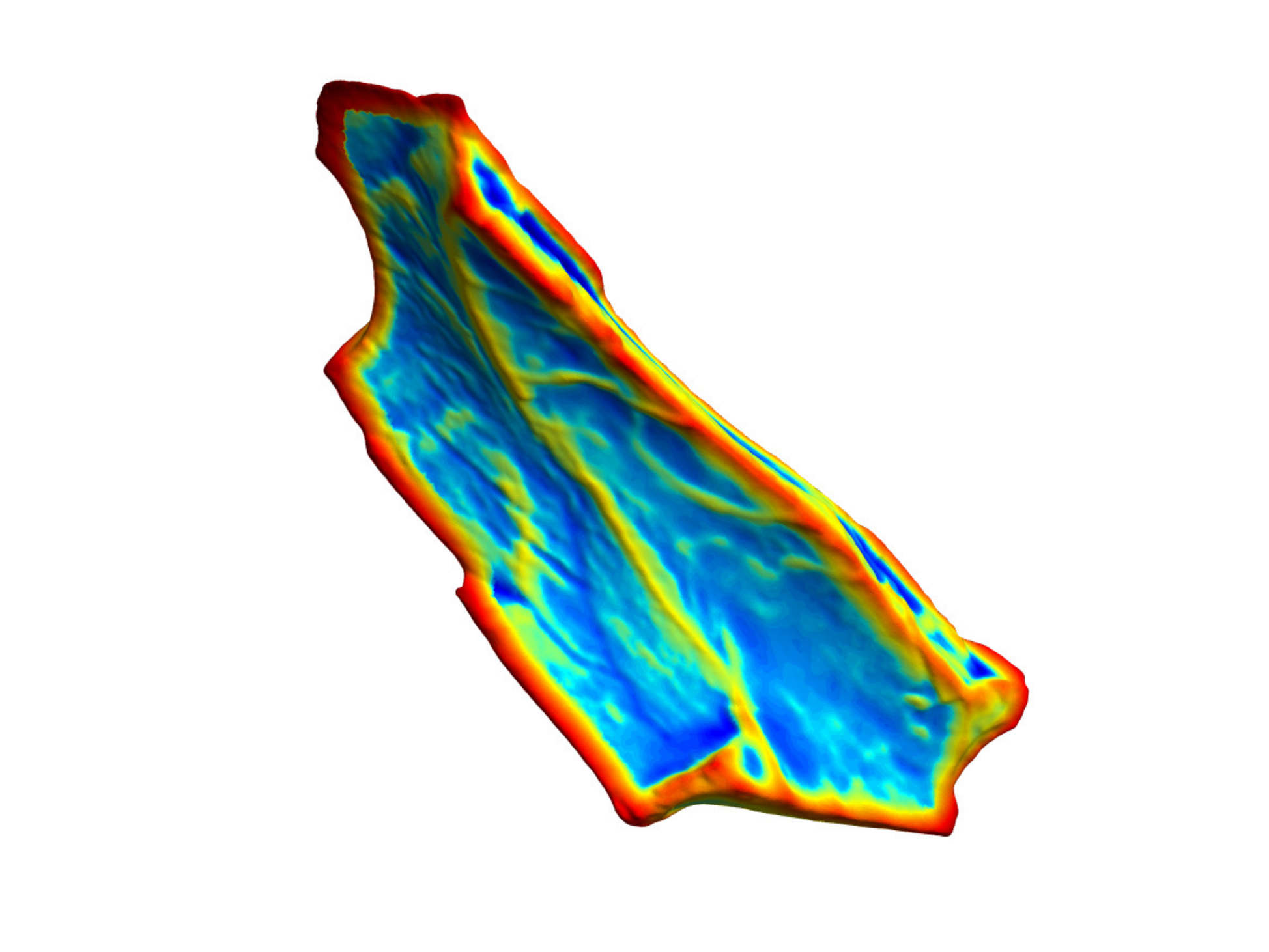}}
\subfloat{\includegraphics[height=0.225\textheight,clip=true,trim=140 50 100 30]{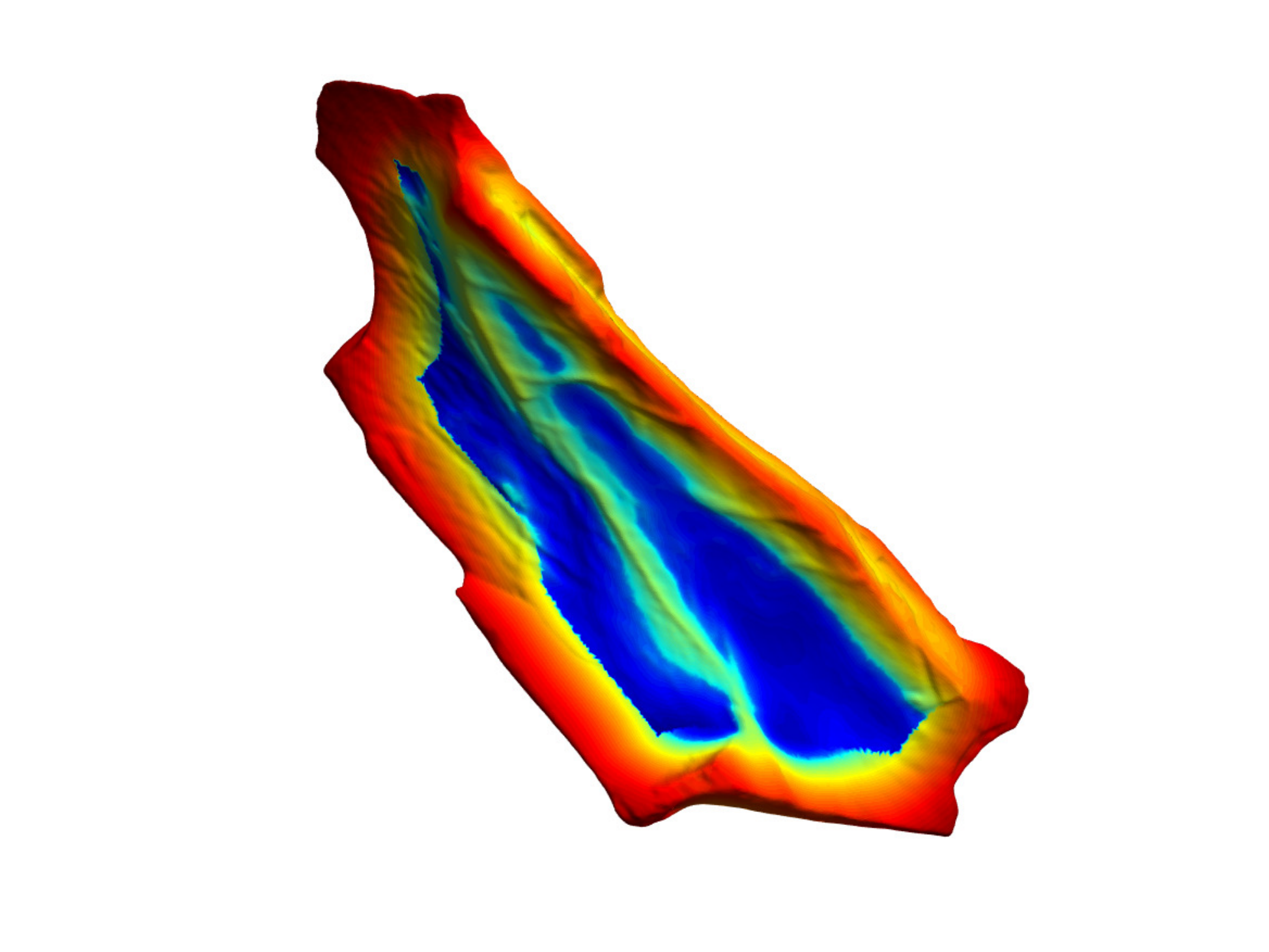}}\\
\vspace{8mm}
\hspace{8mm}
\subfloat{\includegraphics[height=0.2\textheight,clip=true,trim=120 80 140 60]{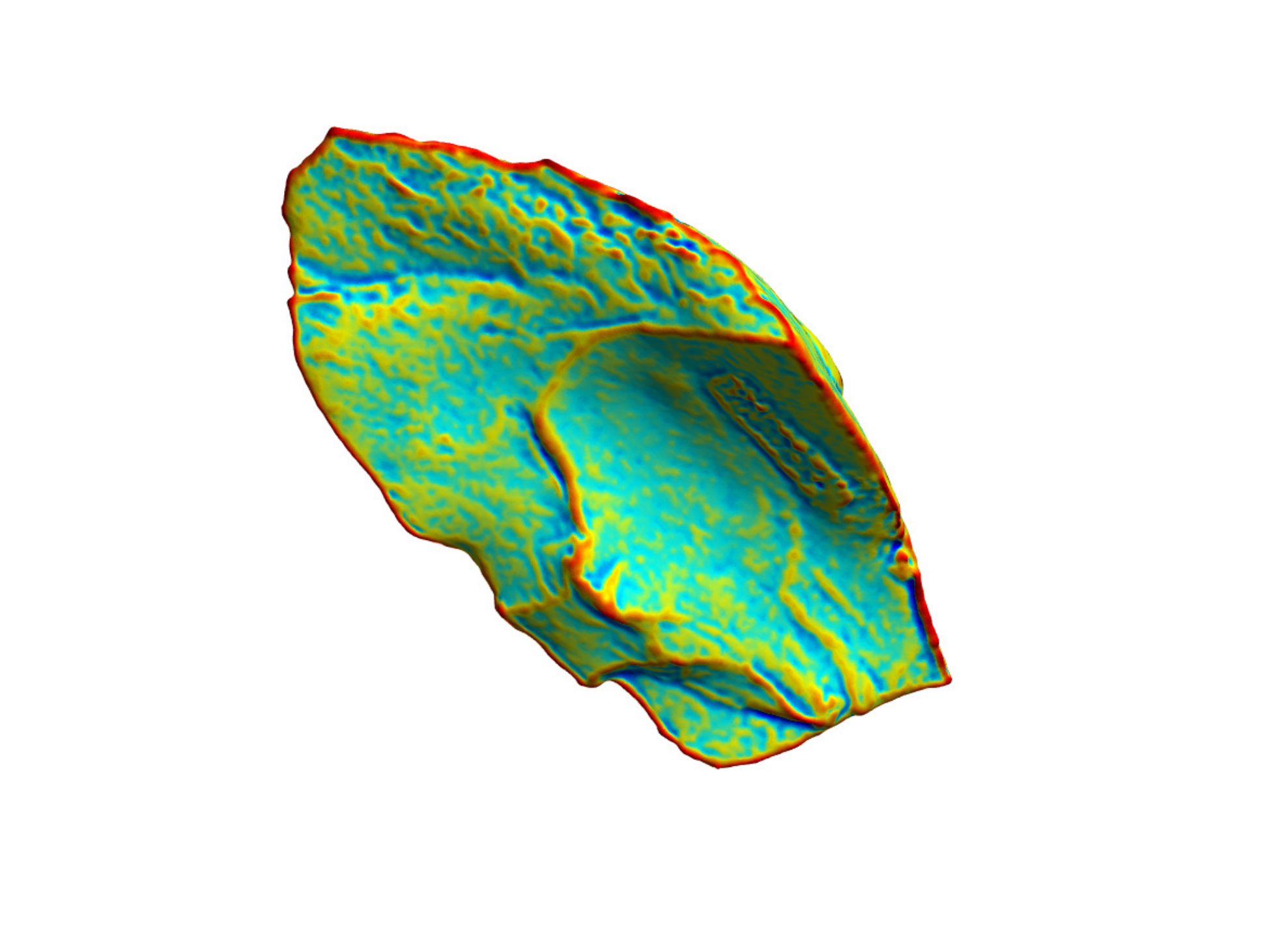}}
\subfloat{\includegraphics[height=0.2\textheight,clip=true,trim=120 80 140 60]{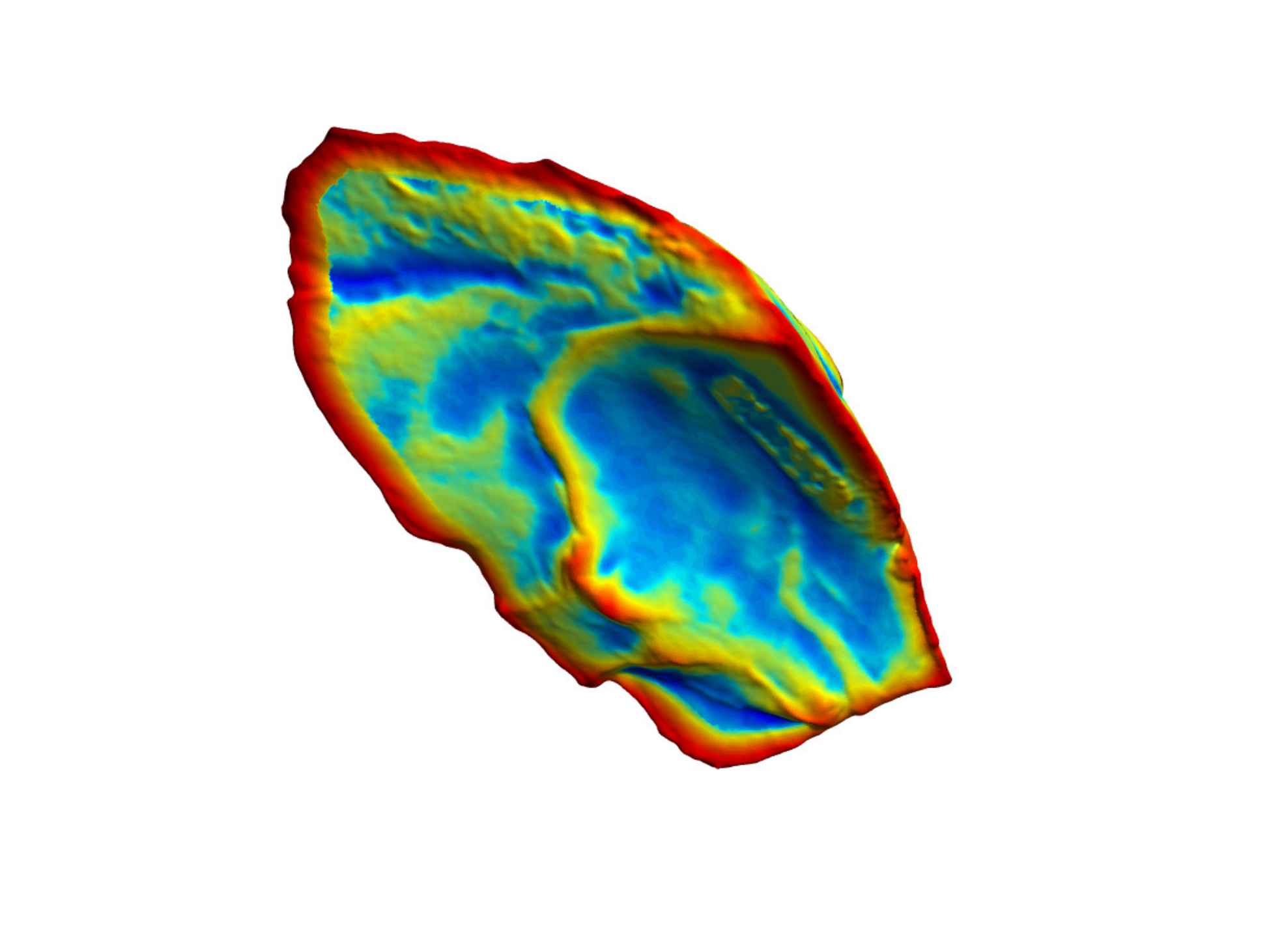}}
\subfloat{\includegraphics[height=0.2\textheight,clip=true,trim=120 80 140 60]{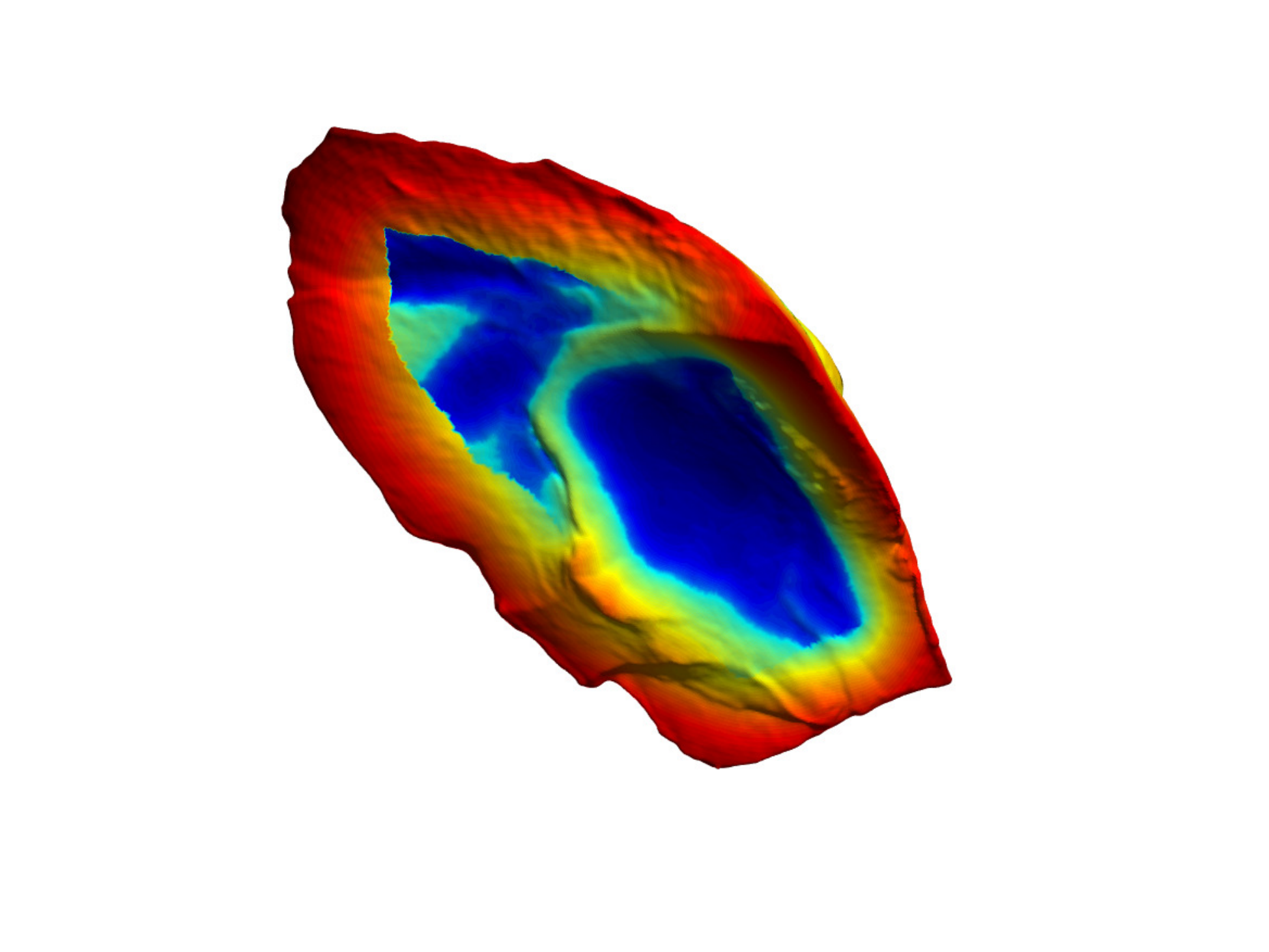}}\\
\caption{Spherical volume invariant computed at radii of 1, 2, and 5.}
\label{fig:SVIplots1}
\end{figure}

\begin{figure}
\centering
\subfloat{\includegraphics[height=0.25\textheight,angle=30,clip=true,trim=220 55 180 40]{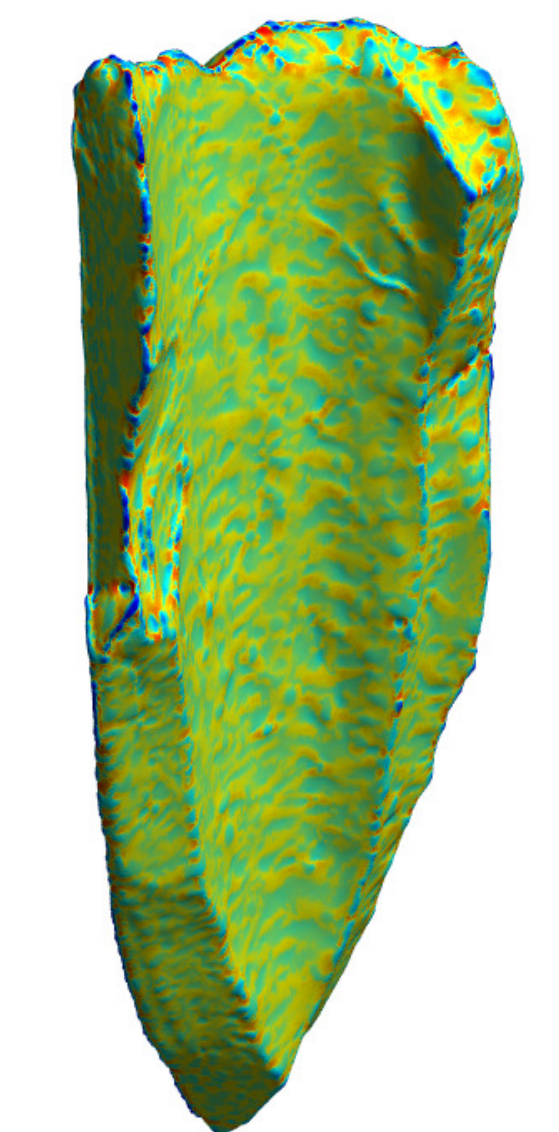}}
\subfloat{\includegraphics[height=0.25\textheight,clip=true,trim=140 50 100 30]{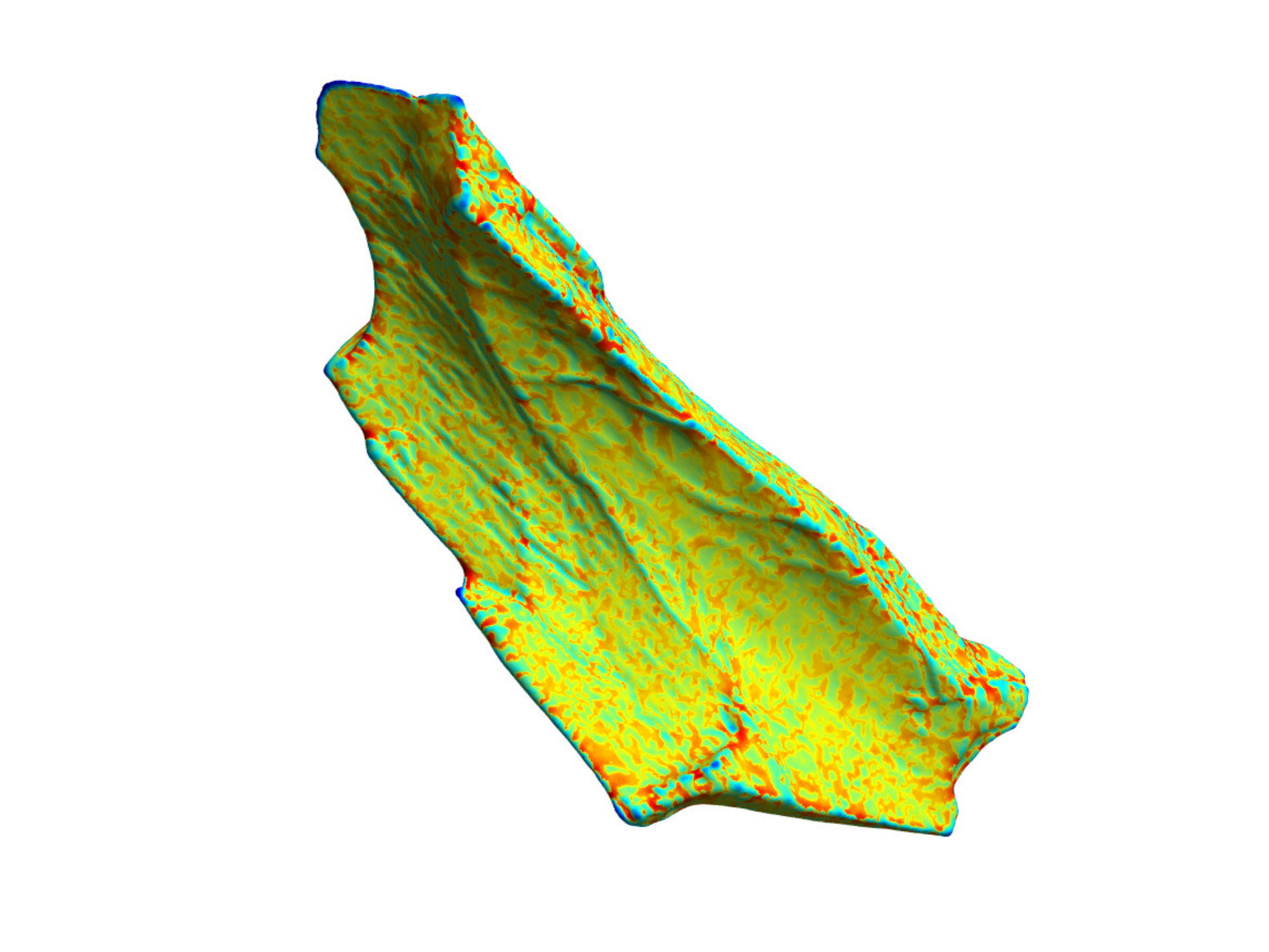}}
\subfloat{\includegraphics[height=0.25\textheight,clip=true,trim=120 80 140 60]{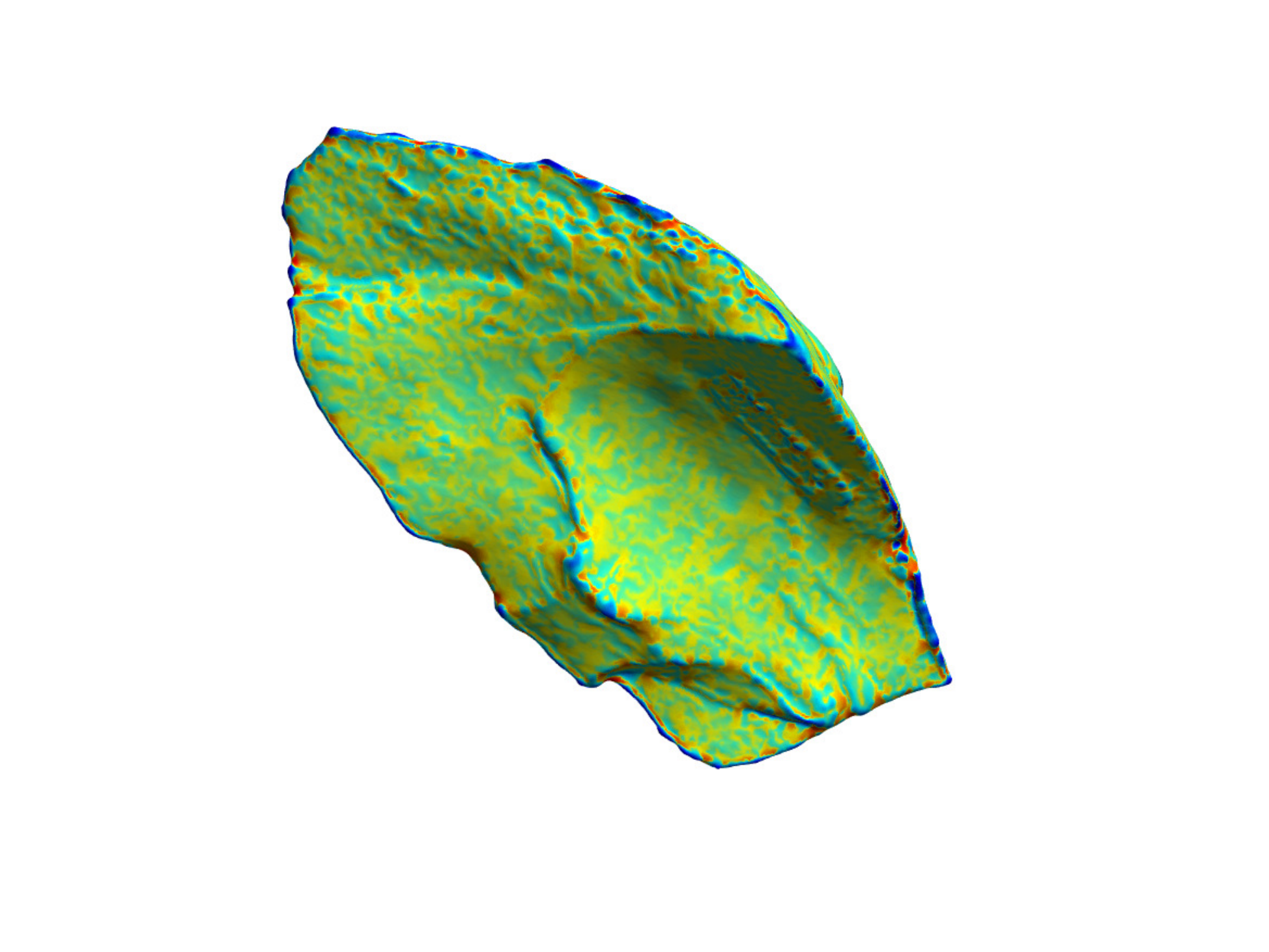}}\\
\caption{Gauss curvature, taken using a radius of 0.5. For power-law correction, $p=0.3$.}
\label{fig:Gauss}
\end{figure}

\begin{figure}
\centering
\subfloat{\includegraphics[height=0.25\textheight,angle=30,clip=true,trim=220 55 180 40]{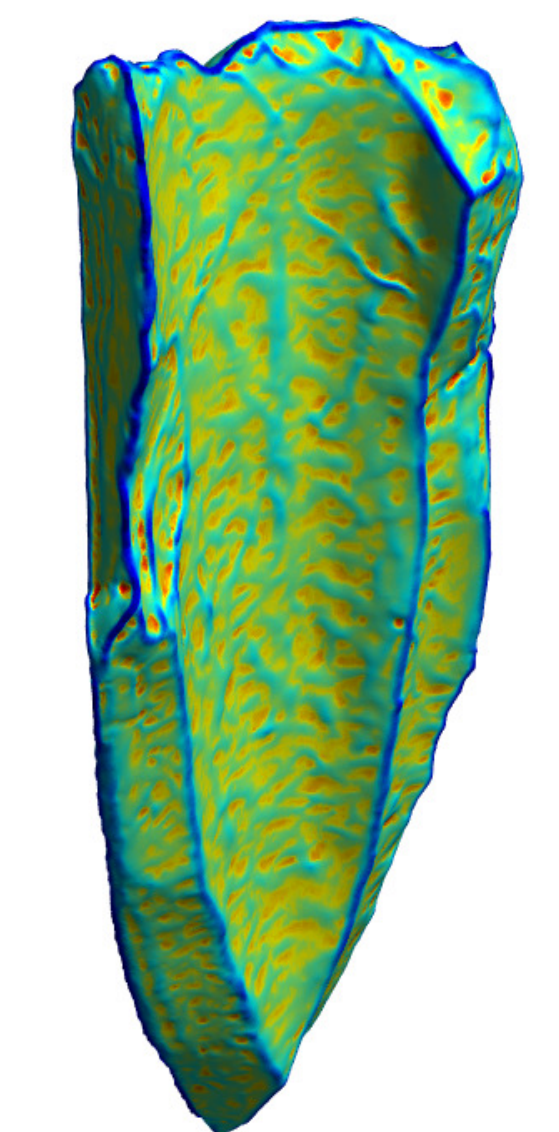}}
\subfloat{\includegraphics[height=0.25\textheight,clip=true,trim=140 50 100 30]{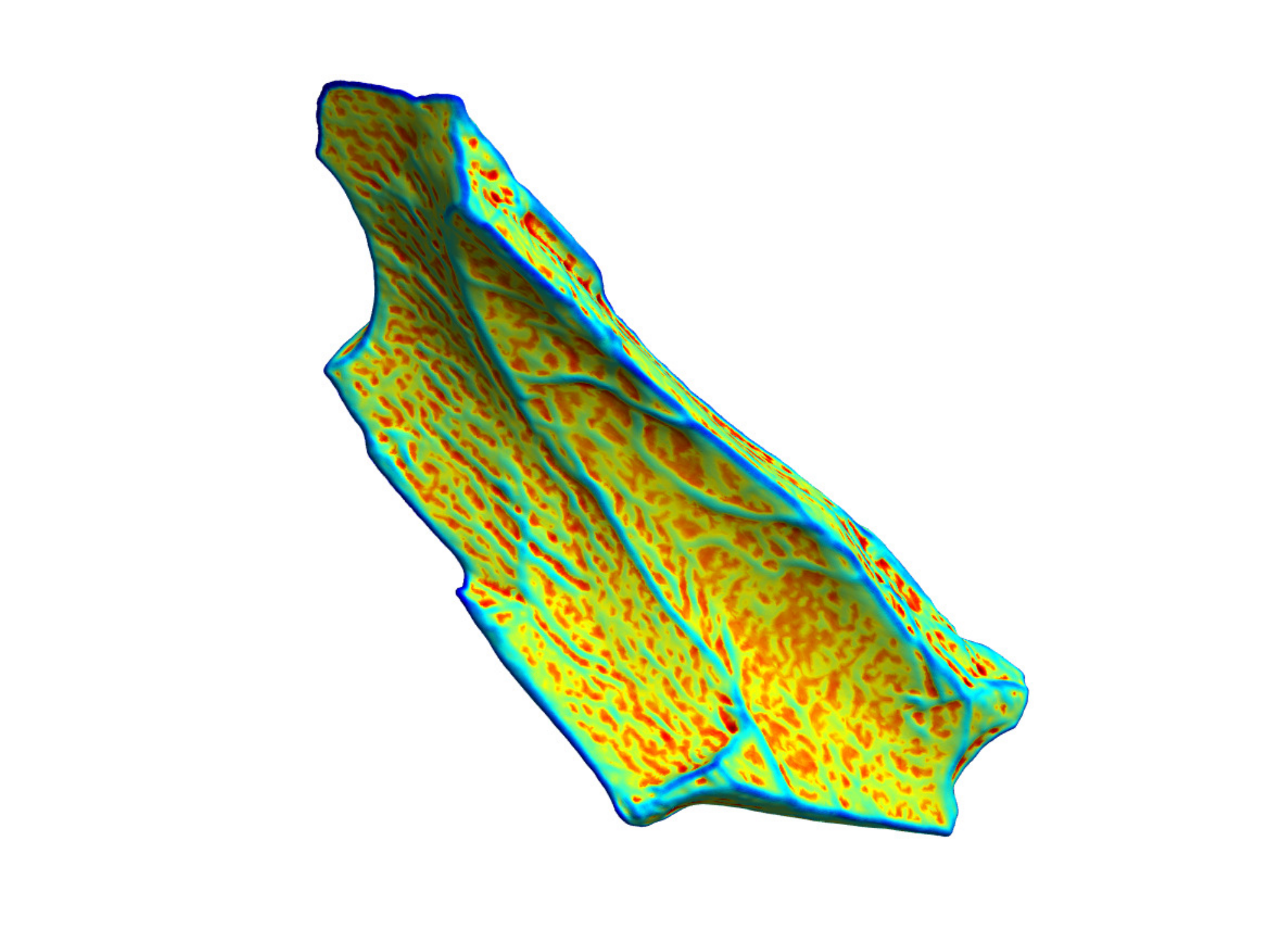}}
\subfloat{\includegraphics[height=0.25\textheight,clip=true,trim=120 80 140 60]{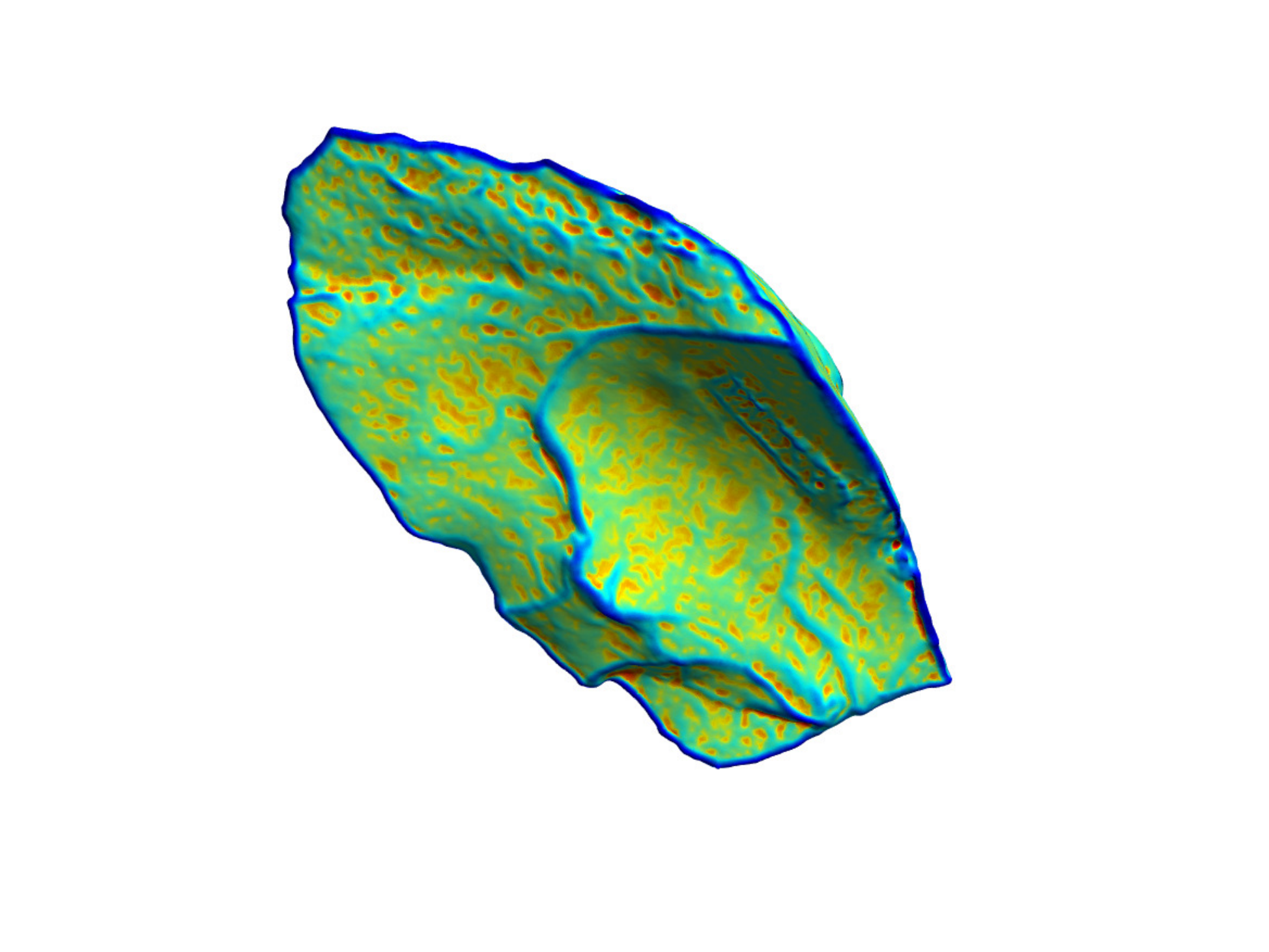}}\\
\caption{Principal curvature $\kappa_1$, taken using a radius of 0.5.}
\label{fig:K1}
\end{figure}

\begin{figure}
\centering
\subfloat{\includegraphics[height=0.25\textheight,angle=30,clip=true,trim=220 55 180 40]{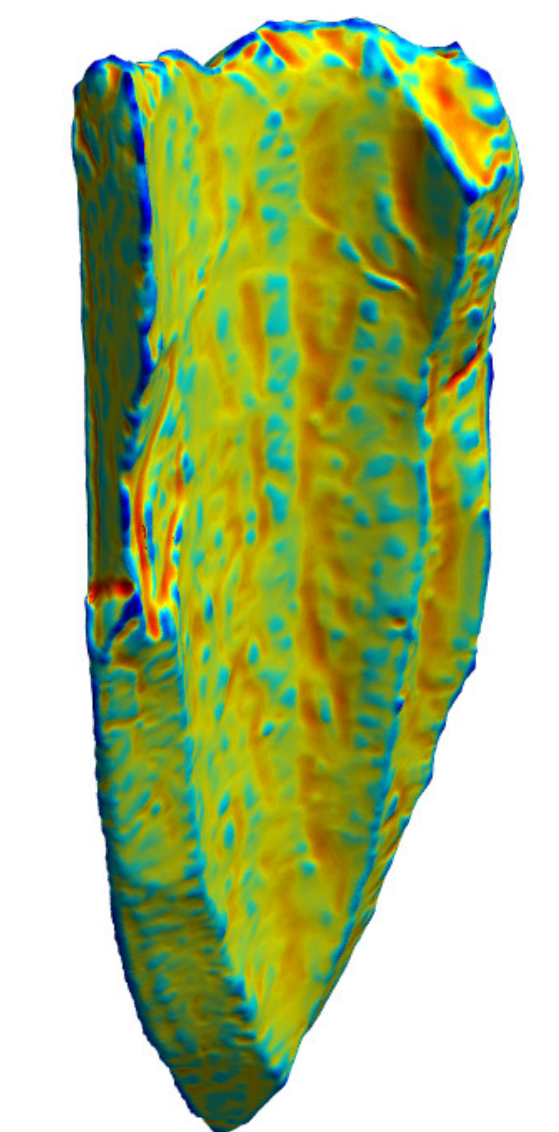}}
\subfloat{\includegraphics[height=0.25\textheight,clip=true,trim=140 50 100 30]{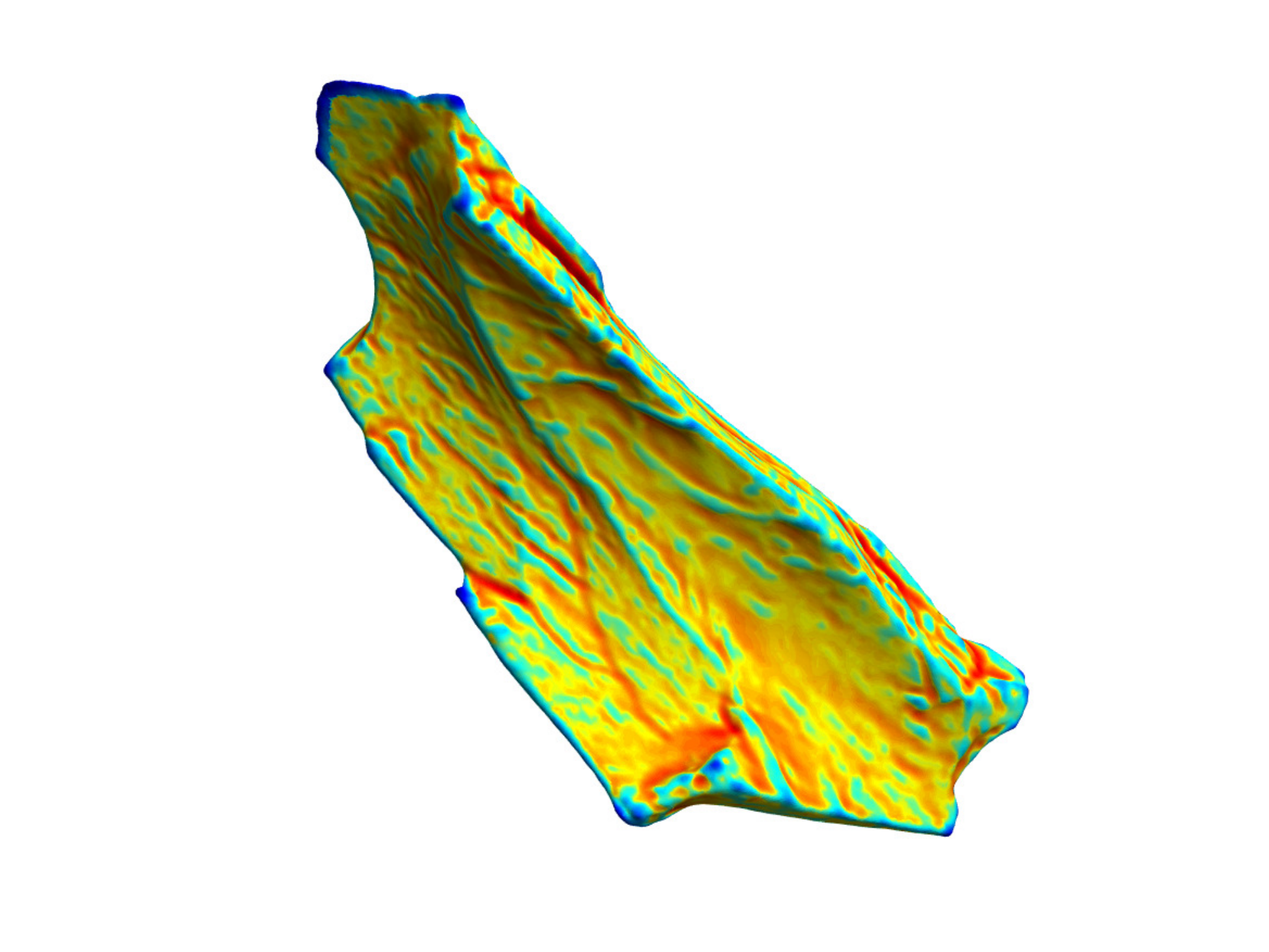}}
\subfloat{\includegraphics[height=0.25\textheight,clip=true,trim=120 80 140 60]{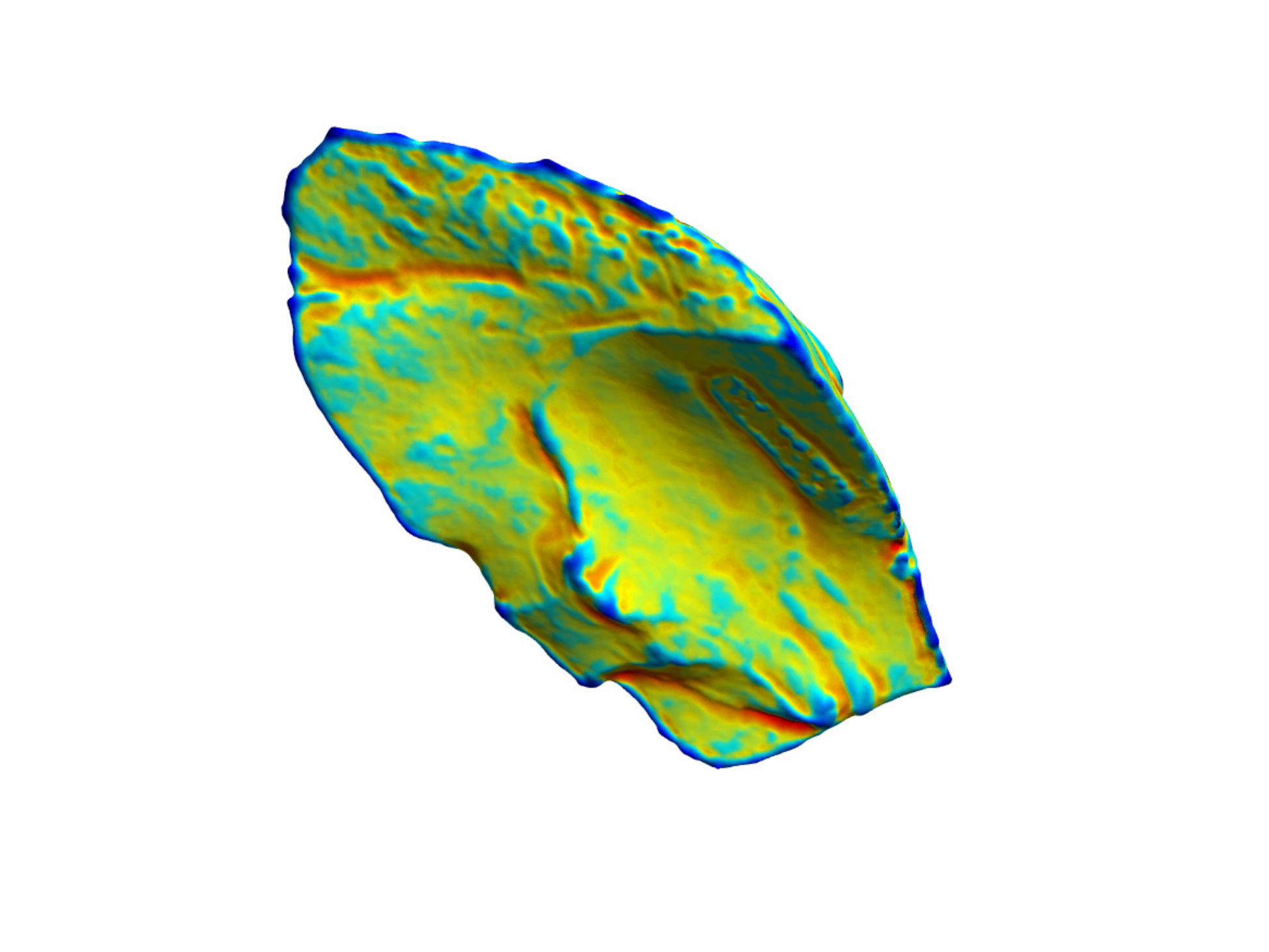}}\\
\caption{Principal curvature $\kappa_2$, taken using a radius of 0.5.}
\label{fig:K2}
\end{figure}


We can detect fracture edges by thresholding the spherical volume invariant. Edge points are taken as those with spherical volumes less than one standard deviation below the mean spherical volumes for the whole fragment. 
Figure \ref{fig:EdgeDetect} shows the results of fracture edge detection on several bone fragments and the Stanford Dragon. This simple approach gives a rough outline of most fracture edges. 
In future work, we plan to investigate automated algorithms for choosing the thresholds as well as the prospect of using the spherical volume invariant with more sophisticated edge detection methods, such as active contours \cite{caselles1997geodesic} on surfaces, or graph-cut segmentation algorithms \cite{merkurjev2013mbo,merkurjev2014graph,ding2001min}.


   
\begin{figure}
\centering


\subfloat{\includegraphics[height=0.25\textheight,angle=30,clip=true,trim=220 55 180 40]{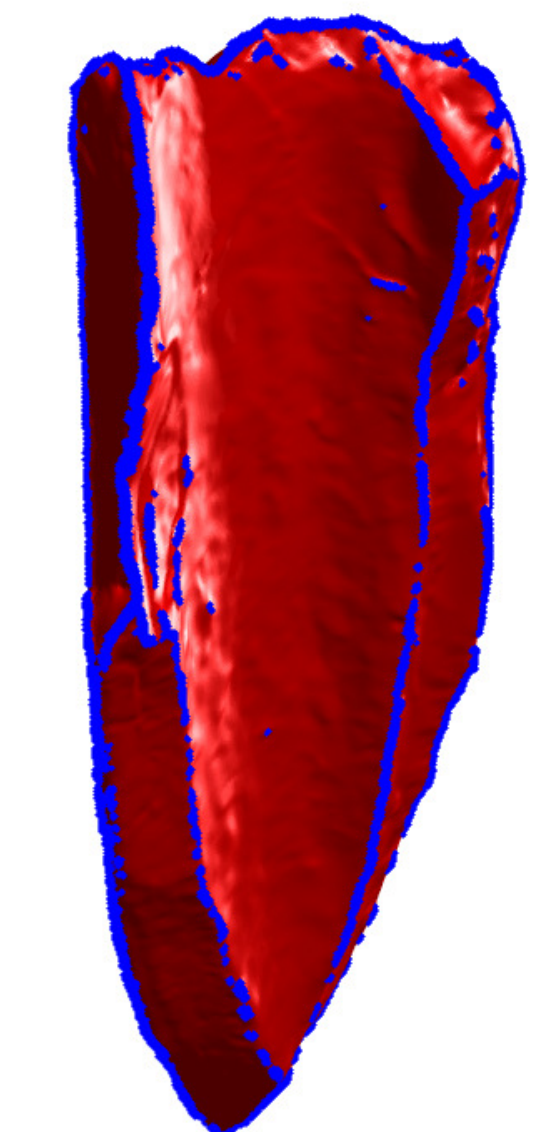}}
\subfloat{\includegraphics[width=0.32\textwidth,clip=true,trim=140 50 140 30]{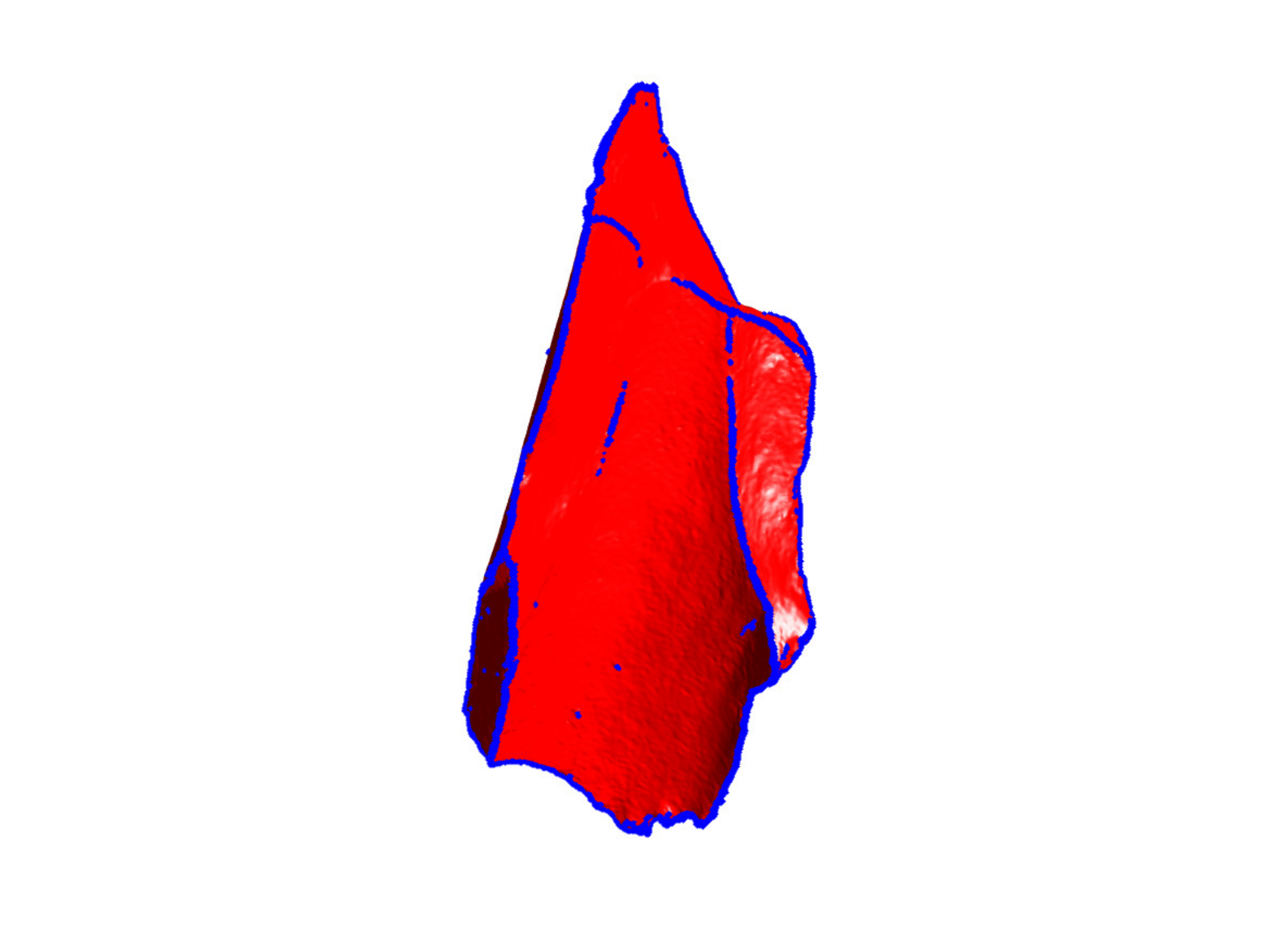}}
\subfloat{\includegraphics[width=0.32\textwidth,clip=true,trim=140 50 100 20]{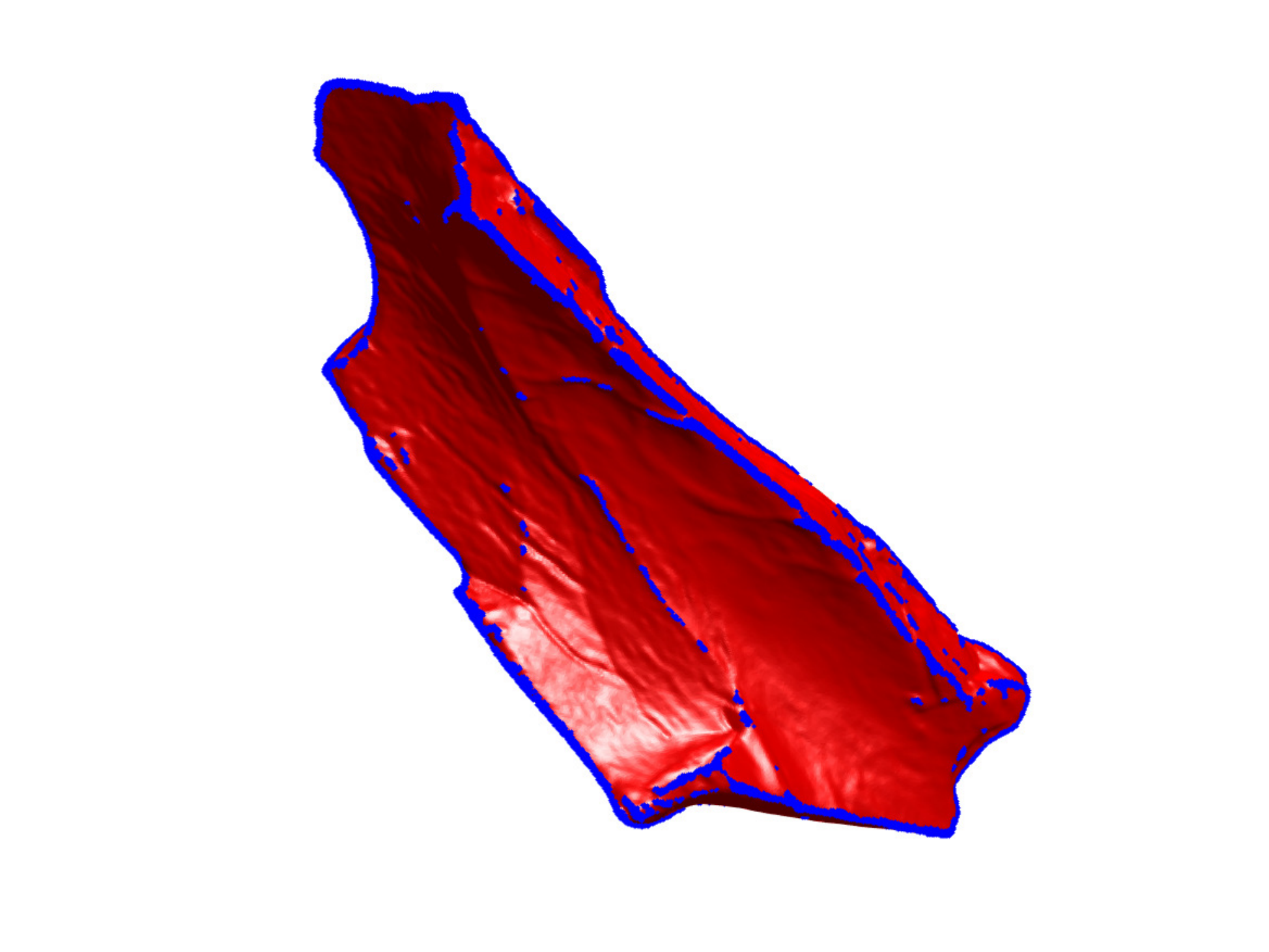}}\\
\subfloat{\includegraphics[width=0.32\textwidth,clip=true,trim=90 40 70 20]{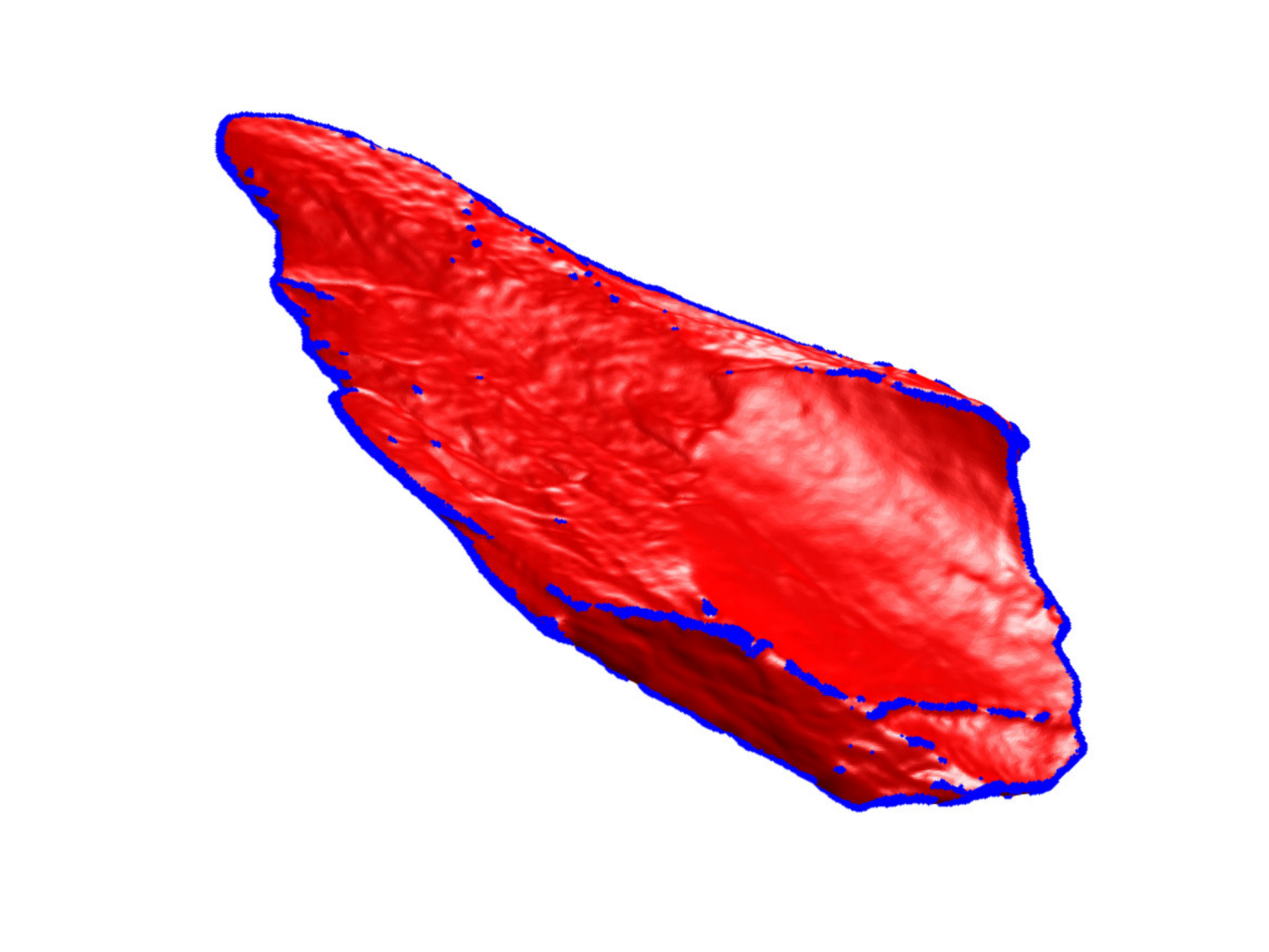}}
\subfloat{\includegraphics[width=0.32\textwidth,clip=true,trim=100 50 90 40]{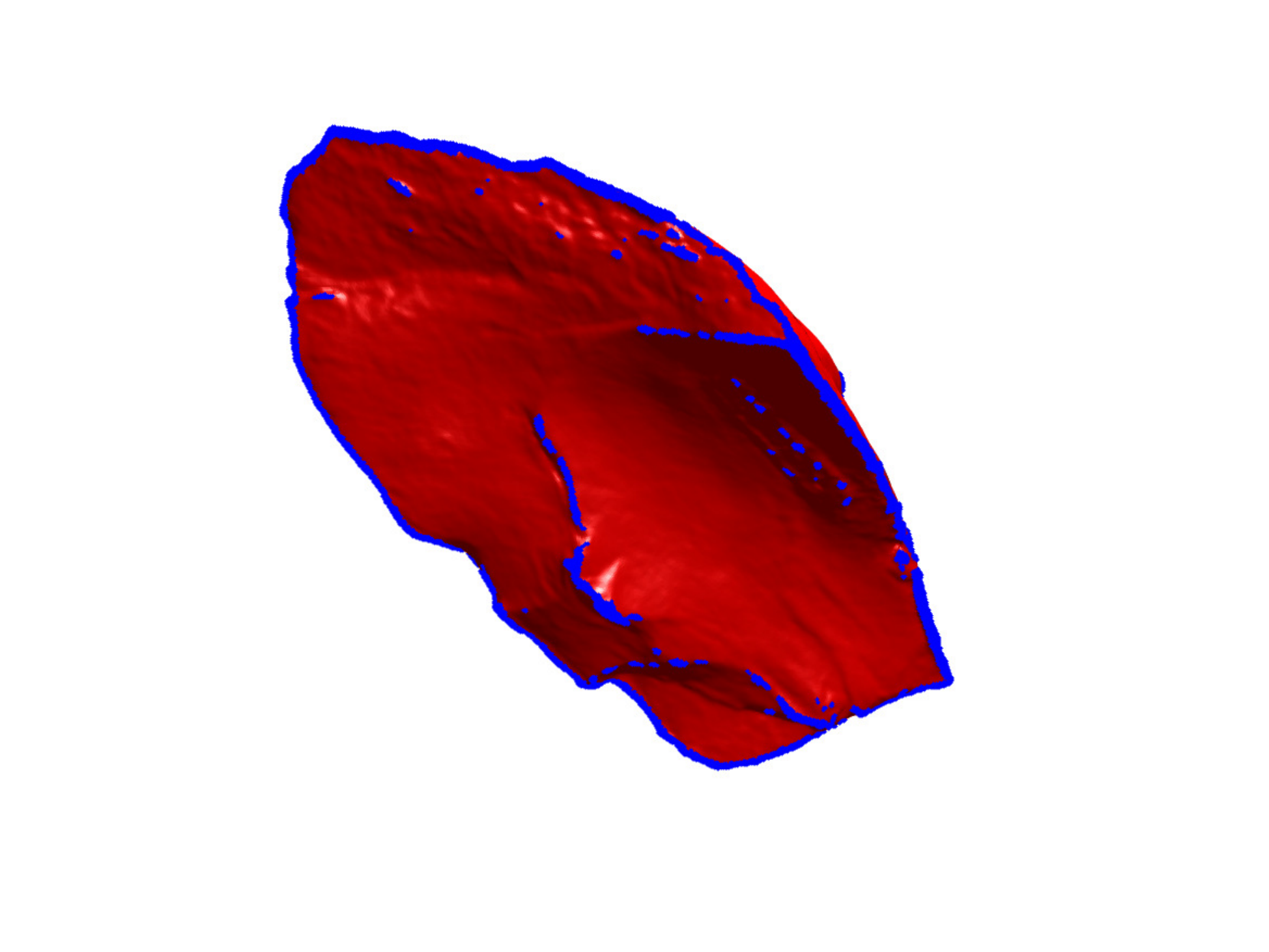}}
\subfloat{\includegraphics[width=0.32\textwidth,clip=true,trim=120 60 95 50]{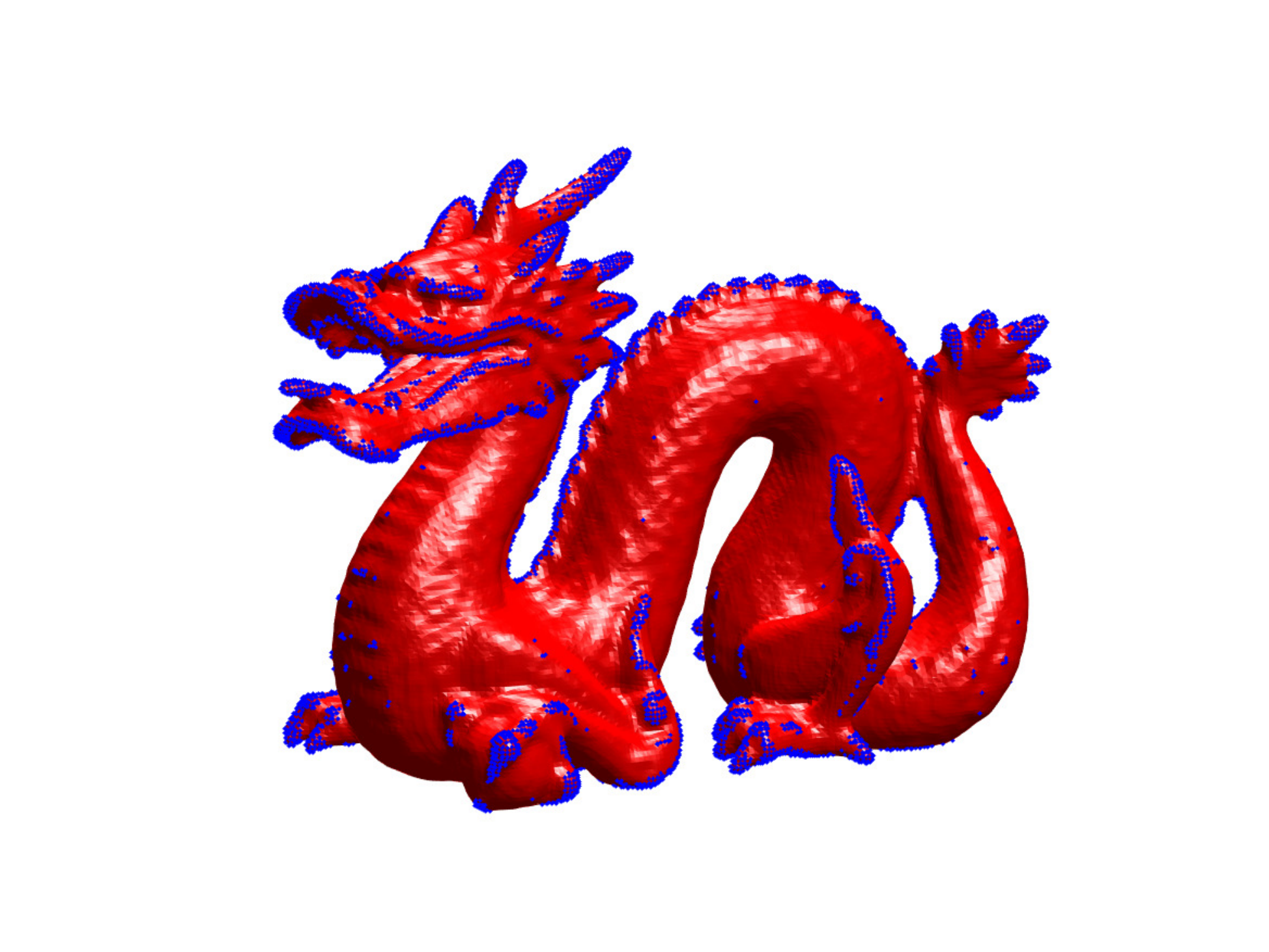}}
\caption{Results of edge detection via thresholding $\kappa_1$ for values at least 1 standard deviation above the mean.}
\label{fig:EdgeDetect}
\end{figure}

\section{Conclusion}
\label{sec:conc}

In this paper, we showed how to compute a class of integral invariants, including the circular area invariant and the spherical volume invariant, in terms of line and surface integrals over the bounding curve and surface. The method is computationally efficient to implement on a triangulated mesh, since it involves simply integrating a function over the mesh triangles, which, when the triangle lies inside the ball, can be done explicitly. In particular, it does not require discretizing the ambient three dimensional space. We showed how to numerically implement the integration accurately and efficiently, and presented the results of some numerical experiments with real data.

\bigskip

{\it Acknowledgements\/}:
 The bone fragments depicted are from an adult elk (\textit{Cervus canadensis}) femur that was broken by adult male spotted hyena (\textit{Crocuta crocuta}) named Scruffy who resides at the Milwaukee County Zoo. The femur was disarticulated and defleshed prior to being fed to the hyena. All fragments were scanned with a DAVID white light scanner that was made available by the Evolutionary Anthropology Labs at the University of Minnesota. The authors also gratefully acknowledge discussions with  Martha Tappen, Jacob Elafandi, and Jacob Theis.

\appendix

\section{Analytic formula for a hypersingular integral}
\label{sec:hyper}

Here, for the reader's convenience, we recall the analytic formula from \cite{nintcheu2009explicit} for the hypersingular integral
\begin{equation}\label{eq:hyperI}
\int_T \frac{1}{|x|^3}\, dx,
\end{equation}
where $T$ is a planar triangle in $\R^3$, such that $0\not\in T$.  (We note that \cite{nintcheu2009explicit} includes analytic formulas for several such triangular hypersingular integrals involving other negative integer powers of $|x|$.)  Let $P$ denote the plane containing $T$ and $\nu$ the unit outward normal vector to $T$ and $P$. In what follows, we take the triangle $T$ to be an open subset of $P$, i.e., $T \cap \partial T = \emptyset$.

Let $x^*\in \R^3$ denote the orthogonal projection of the origin onto the plane $P$.
Let $x^1,x^2,x^3\in \R^3$ be the vertices of $T$, given with positive orientation; for convenience of notation we write $x^4 = x^1$.   Define
\begin{equation}\label{eq:thetaform}
\theta = \begin{cases}\
0,&\text{if }\ \ x^* \in P\setminus \overline{T}\\
\ \pi,&\text{if }\ \ x^* \in \partial T\setminus \{x^1,x^2,x^3\}\\
\ 2\pi,&\text{if }\ \ x^* \in T\\
\ \theta_i,&\text{if }\ \ x^*=x^i,
\end{cases}
\end{equation}
where  $\theta_i$ is the interior angle of $T$ at the vertex $x^i$.

Let $L^i$ denote the oriented edge of the triangle $T$ from $x^i$ to $x^{i+1}$. Associated with each edge $L^i$, we construct an orthonormal basis $(\e^i_1,\e^i_2)$ for the plane $P$ with origin $x^*$, $\e^i_1$ taken in the direction of the edge $L^i$, and $\e^i_2 = \nu\times \e^i_1$ chosen so that $(\e^i_1,\e^i_2,\nu)$ is an orthonormal basis for $\R^3$.
Let 
\[p^j_i = (x^j-x^*)\cdot \e^i_1, \qquad  q^j_i = (x^j-x^*)\cdot \e^i_2,\] 
be the planar coordinates of the vertex $x^j$ in the basis $(\e^i_1,\e^i_2)$.  By definition, $q^1_1=q^1_2$, $q^2_2=q^3_2$, and $q^3_3=q^4_3$, since the vertices $x^j$ and $x^{j+1}$ lie along the line spanned by $\e^j_1$. We denote the common values as
\[q_i :=q^i_i=q^{i+1}_i.\]

Finally, set $\eta = x^1\cdot \nu$, noting that $\eta\neq 0$, since $0\not\in T$.  We then define
\begin{equation}\label{eq:gami}
\gamma_i = \arctan\left( \frac{-2p^i_iq_i\eta|x^i|}{(q_i)^2|x^i|^2 - (p^i_i)^2\eta^2} \right) - \arctan\left( \frac{-2p^{i+1}_iq_i\eta|x^{i+1}|}{(q_i)^2|x^{i+1}|^2 - (p^{i+1}_i)^2\eta^2} \right),
\end{equation}
using the branch of $\arctan$ with values in $(-\pi/2,\pi/2)$. Finally, the hypersingular integral \eqref{eq:hyperI} is given by
\begin{equation}\label{eq:hyperform}
\int_T \frac{1}{|x|^3}\, dx= \frac{\gamma_1 + \gamma_2 + \gamma_3 + 2\,\text{sign}(\eta)\,\theta}{2\eta}.
\end{equation}

\vskip20pt


\end{document}